\newtheorem{theo}{Theorem}[section]
\newtheorem{lemma}[theo]{Lemma}
\newtheorem{propo}[theo]{Proposition}
\newtheorem{assump}[theo]{Assumption}
\newtheorem{defi}[theo]{Definition}
\newtheorem{coro}[theo]{Corollary}
\newtheorem{rem}[theo]{Remark}
\newtheorem{stat}[theo]{Statement}
\newcommand\Ind{\operatorname{Ind}}
\newcommand\id{\operatorname{id}}
\newcommand\Set{\operatorname{\bf Set}}
\newcommand\Emb{\operatorname{\bf Emb}}
\newcommand\Str{\operatorname{\bf Str}}
\newcommand\Lin{\operatorname{\bf Lin}}
\newcommand\colim{\operatorname{colim}}
\newcommand\ca{\mathcal {A}}
\newcommand\cc{\mathcal {C}}
\newcommand\ck{\mathcal {K}}
\newcommand\cl{\mathcal {L}}
\date{November 10, 2014}
\begin{document}
\title[Classification theory for accessible categories]
{Classification theory for accessible categories}
\author[M. Lieberman and J. Rosick\'{y}]
{M. Lieberman and J. Rosick\'{y}$^*$}
\thanks{$^*$ Supported by the Grant Agency of the Czech Republic under the grant 
               P201/12/G028.} 
\address{
\newline M. Lieberman\newline
Department of Mathematics and Computer Science\newline
Kalamazoo College\newline
1200 Academy Street\newline
Kalamazoo, MI 49006, USA\newline
mlieberm@kzoo.edu\newline
\newline J. Rosick\'{y}\newline
Department of Mathematics and Statistics\newline
Masaryk University, Faculty of Sciences\newline
Kotl\'{a}\v{r}sk\'{a} 2, 611 37 Brno, Czech Republic\newline
rosicky@math.muni.cz
}
 
\begin{abstract}
We show that a number of results on abstract elementary classes (AECs) hold in accessible categories with concrete directed colimits.  In particular, we prove a generalization of a recent result of Boney on tameness under a large cardinal assumption.  We also show that such categories support a robust version of the Ehrenfeucht-Mostowski construction.  This analysis has the added benefit of producing a purely language-free characterization of AECs, and highlights the precise role played by the coherence axiom.
\end{abstract} 
\keywords{}
\subjclass{}

\maketitle
 
\section{Introduction}
Classical model theory studies structures using the tools of first order logic.  In an effort to develop classical results in more general logics, Shelah introduced abstract elementary classes (AECs), a fundamentally category-theoretic generalization of elementary classes, in which logic and syntax are set aside and the relevant classes of structures are axiomatized in terms of a family of strong embeddings (see \cite{S}; \cite{Ba} contains the resulting theory). Accessible categories, on the other hand, were first introduced by Makkai and Par\'e in response to the same fundamental problem: where earlier work in categorical logic had focused on the structure of theories and their associated syntactic categories, with models a secondary notion, accessible categories represented an attempt to capture the essential common structure of the categories of models themselves.  Despite the affinity of these two ideas, it is only recently that their connections have begun to be appreciated (see \cite{BR} and \cite{L}).  In particular, AECs have been shown to be special accessible categories with directed colimits (i.e. direct limits).  We refine this characterization further, axiomatizing AECs as special accessible categories with {\it concrete} directed colimits; that is, we describe them as pairs $(\ck,U)$ where $\ck$ is an accessible category with directed colimits, and $U:\ck\to\Set$ is a faithful functor into the category of sets that preserves directed colimits.  For such a pair to be an AEC, it must also satisfy three additional conditions: all morphisms must be monomorphisms, the category must have the property of coherence, and it must be iso-full.  The first condition can be obtained without loss of generality (see Remark \ref{re3.2}).  Coherence (sometimes known as the "funny" axiom for AECs) can, surprisingly, be formulated as a property of the functor $U$ (see Definition \ref{def3.1})---our analysis highlights areas in which this hypothesis appears to be indispensable, and those in which it can be set aside entirely.  It is more complicated to formulate iso-fullness as a property of $U$, but this can also be done: see Remark \ref{re3.5} below.

We wish to emphasize that Shelah's Categoricity Conjecture, which is the main test question for AECs, is a property
of a category $\ck$ itself -- one does not need $U$ for its formulation. For a category theorist, this question may appear artificial, but it can
be reformulated as an ``injectivity property": the Categoricity Conjecture is equivalent to the assertion that any $\lambda$-saturated object is $\mu$-saturated for all $\lambda<\mu$ where $\lambda$-saturated
means being injective with respect to morphisms between $\lambda$-presentable objects (with $\lambda$ and $\mu$ regular cardinals). 
This can be quite easily proved assuming the presence of pushouts, which is a very strong amalgamation property, never present in an AEC---induced mappings from a putative pushout object will not generally be monomorphisms, hence cannot be $\ck$-morphisms. 
Given the weaker hypothesis of amalgamation, one instead relies on constructions involving Galois types, including the element by element construction of $\ck$-morphisms, which often forces one to assume coherence. Grossberg and VanDieren
\cite{GV} have succeeded in showing Shelah's Categoricity Conjecture for successor cardinals in tame AECs. Recently, Boney
\cite{B} proved that, assuming the existence of a proper class of strongly compact cardinals, every AEC is tame. In combination 
with \cite{GV}, this implies that, assuming the existence of a proper class of strongly compact cardinals, Shelah's Categoricity Conjecture in a successor cardinal
is true for AECs. We will introduce tameness for accessible categories with concrete directed colimits and will show that, assuming
the existence of a proper class of strongly compact cardinals, every accessible category with concrete directed colimits is tame. In fact this generalization
of Boney's theorem follows from an old result of Makkai and Par\'e (\cite{MP}, 5.5.1) about accessible categories. 
In addition, we begin the process of extending a fragment of stability theory from AECs to accessible categories with concrete directed colimits, a process which leads to several useful insights.  First, we note that such categories admit a robust EM-functor---the existence of such a functor 
in an abstract elementary class is one of many results that flow from Shelah's Presentation Theorem, which involves both the assumption of coherence and the reintroduction of language into the fundamentally syntax-free world of AECs. This is an added benefit: the current study highlights areas 
in which coherence can be dispensed with---the existence of EM-models being a particularly noteworthy example---and those where it appears to be essential---without coherence, arguments involving the element-by-element construction of morphisms become problematic, if not impossible.  

On the one hand, this supports the contention that AECs strike the appropriate balance between structure and generality, and are thus ideally suited for the development of abstract classification theory.  On the other, it sheds light on the extent to which classification theory can be developed in a more general (and more category-theoretically natural) setting.

The authors wish to acknowledge the anonymous referee, whose feedback has led to noteworthy improvements in this paper.  Thanks are also due to John Baldwin, who provided valuable input at many stages of the writing process.

\section{Accessible categories with directed colimits}

Accessible categories were introduced in \cite{MP} as categories closely connected with categories of models of infinitary ($L_{\kappa,\lambda}$)
theories. Roughly speaking, an accessible category is one that is closed under certain directed colimits, and whose objects can be built via certain directed colimits of a set of small objects.  To be precise, we say that a category $\ck$ is $\lambda$-\textit{accessible}, $\lambda$ a regular cardinal, if it closed under $\lambda$-directed colimits (i.e. colimits indexed by a $\lambda$-directed poset) and contains, up to isomorphism, a set $\ca$ 
of $\lambda$-presentable objects such that each object of $\ck$ is a $\lambda$-directed colimit of objects from $\ca$. Here $\lambda$-presentability functions as a notion of size that makes sense in a general, i.e. non-concrete, category: we say an object $K$ is $\lambda$-\textit{presentable} if its hom-functor $\ck(K,-):\ck\to\Set$ preserves $\lambda$-directed colimits. Put another way, $K$ is $\lambda$-presentable if for any morphism $f:K\to M$ with $M$ a $\lambda$-directed colimit $\langle \phi_\alpha:M_\alpha\to M\rangle$, $f$ factors essentially uniquely through one of the $M_\alpha$, i.e. $f=\phi_\alpha f_\alpha$ for some $f_\alpha:K\to M_\alpha$.  We typically write {\it finitely accessible} and {\it finitely presentable} in place of $\omega$-accessible and $\omega$-presentable, respectively: the category of groups, for example, is finitely accessible and an object is finitely presentable if and only if it is finitely presented in the usual sense.
Detailed treatments of general accessible categories can be found in \cite{MP} or \cite{AR}---we here develop the theory of accessible categories satisfying progressively stronger and more recognizably model-theoretic conditions. 

For each regular cardinal $\kappa$, an accessible category contains, up to isomorphism, only a set of $\kappa$-presentable objects.
Any object $K$ of a $\lambda$-accessible category is $\kappa$-presentable for some regular cardinal $\kappa$. Given an object $K$, the smallest cardinal $\kappa$ such that $K$ is not $\kappa$-presentable is called the \textit{presentability rank} of $K$.  This notion is particularly well behaved in case the accessible category has all directed colimits, not merely the $\lambda$-directed ones (see \cite{BR} 4.2):

\begin{stat}\label{st2.0}
{\em If $\ck$ has directed colimits then the presentability rank $\kappa$ of any object $K$ is a successor cardinal, i.e. $\kappa=|K|^+$. We say 
that $|K|$ is the \textit{size} of $K$.
}
\end{stat} 

These sizes play a role analogous to that of the cardinalities of underlying sets in AECs.

If $\lambda<\mu$, then any $\lambda$-directed colimit is $\mu$-directed, but a $\lambda$-accessible category $\ck$ need not be $\mu$-accessible. 
For this one needs a finer relation, $\lambda\trianglelefteq\mu$ (see \cite{AR} 2.12). This technical issue disappears, however, if $\ck$ has all directed colimits: we can replace $\trianglelefteq$ with $\leq$ (see \cite{BR} 4.1). Moreover, if a functor
$F:\ck\to\cl$ between $\lambda$-accessible categories with directed colimits preserves $\lambda$-directed colimits and $\lambda$-presentable objects
then $F$ also preserves $\mu$-presentable objects for all $\mu>\lambda$. Without directed colimits, we have this only if $\lambda\trianglelefteq\mu$. 

The process of developing a fragment of model theory within accessible categories began in \cite{R}.  Certain essential properties of AECs are fundamentally diagrammatic, and are adopted without change.  For example, we say that an accessible category $\ck$ satisfies the \textit{amalgamation property} if any pair of morphisms $f:K\to L$, $g:K\to M$ can be completed to a commutative
diagram 
$$
\xymatrix@=4pc{
L \ar[r]^{} & N \\
K \ar [u]^{f} \ar [r]_{g} & M \ar[u]_{}
}
$$
We say $\ck$ satisfies the \textit{joint embedding property} if for any two objects $K,L$ in $\ck$ there is an object $N$ with morphisms $K\to N$
and $L\to N$.

More importantly, \cite{R} introduces a number of essentially model-theoretic notions and arguments, particularly those involving saturation and weak stability.  For a regular cardinal $\lambda$, an object $K$ of a category $\ck$ is said to be $\lambda$-\textit{saturated} if it is injective with respect to morphisms between $\lambda$-presentable objects. This means that for any morphisms 
$f:A\to K$ and $g:A\to B$ where $A$ and $B$ are $\lambda$-presentable there is a morphism $h:B\to K$ such that $hg=f$. In the category of models of
a (finitary) first-order theory and elementary embeddings, this notion coincides with the usual one. In an abstract elementary class, it coincides
with $\lambda$-model homogeneity. 

\cite{R} also introduces \textit{weak} $\lambda$-\textit{stability} for accessible categories, which forms the crux of the argument for the existence of monster objects---large, highly saturated objects---in accessible categories with directed colimits and amalgamation.  We give the argument in some detail, as it has not heretofore been developed in this generality.  As in AECs, we must assume the existence of arbitrarily large cardinals $\lambda$ with $\lambda^{<\lambda}=\lambda$, which follows from either GCH or the existence of a proper class of strongly inaccessible cardinals.  As in discussions of monster models in the AEC literature, too, we note that monster objects are used largely as a convenient shorthand, and can typically be written out of proofs, albeit at some cost in length and comprehensibility.

The central idea is that any accessible category with directed colimits and amalgamation is weakly stable in many cardinalities, and that weak stability in sufficiently large cardinals guarantees the existence of highly saturated objects.  In particular, in any weakly $\mu$-stable $\mu$-accessible
category with directed colimits and the amalgamation property, any $\mu^+$-presentable object admits a morphism to a $\mu^+$-presentable $\mu$-saturated
object (see \cite{R} Remark 4(2)).  Crucially, Proposition 2 asserts that if $\ck$ is a $\lambda$-accessible category and $\lambda\trianglelefteq\mu$ is a regular cardinal greater then the number of morphisms between $\lambda$-presentable objects of $\ck$ then $\ck$ is weakly $\mu^{<\mu}$-stable.  If $\ck$ has all directed limits, $\lambda\trianglelefteq\mu$ can
be weakened to $\lambda\leq\mu$, and the two results would combine to give a ready supply of monster objects.  There is an important gap in that proof of Proposition 2, however, which we now fill: as long as $\mu^{<\mu}=\mu$, the argument goes through as written.  Provided we assume that there are a proper class of such cardinals, we have the promised abundance of monsters.  To be precise, after this correction, \cite{R} yields the following 

\begin{stat}\label{st2.1}
{
\em 
Let $\ck$ be an accessible category with directed colimits and the amalgamation property and assume that there are arbitrarily large regular cardinals $\lambda$
such that $\lambda^{<\lambda}=\lambda$. Then there are arbitrarily large regular cardinals $\lambda$ such that any $\lambda^+$-presentable object $K$ admits 
a morphism $g:K\to L$ to a $\lambda^+$-presentable $\lambda$-saturated object $L$.
}
\end{stat} 
 
In a $\lambda$-accessible category having directed colimits and the joint embedding property, any two $\lambda^+$-presentable $\lambda$-saturated objects 
are isomorphic (see \cite{R} Theorem 2). The proof of this theorem yields the following 

\begin{stat}\label{st2.2}
{
\em
Let $\ck$ be a $\lambda$-accessible category with directed colimits and the joint embedding property and $g_1:K_1\to L$, $g_2:K_2\to L$ 
be morphisms from $\lambda^+$-presentable objects to $\lambda^+$-presentable $\lambda$-saturated objects. Then any morphism $f:K_1\to K_2$ extends 
to an isomorphism $s:L\to L$ such that $sg_1=g_2f$.
}
\end{stat}

\begin{rem}\label{re2.3}
{
\em
The $\lambda$-saturated objects of size $\lambda$ thus obtained are our {\it monster objects}.

}
\end{rem}

We say that a category is \textit{large} if it is not equivalent to a small category. For accessible categories, this entails that there exist objects
with arbitrarily large presentability ranks---much like, in the model-theoretic context, the assumption of arbitrarily large models. For large categories of models of $L_{\kappa,\omega}$, downward L\"{o}wenheim-Skolem theorem
implies the existence of models of all sizes starting from some cardinal. Motivated by this, \cite{BR} called a category $\ck$ \textit{LS-accessible}
if it is an accessible category for which there is a cardinal $\lambda$ such that $\ck$ has objects of all sizes $\mu\geq\lambda$. Any LS-accessible
category is large. The smallest cardinal $\lambda$ such that $\ck$ is $\mu$-accessible for all regular cardinals $\mu\geq\lambda$ and has objects 
of all sizes $\mu\geq\lambda$ will be denoted $\lambda_\ck$. 

Any large finitely accessible category is LS-accessible. This follows from \cite{MP} 3.4.1 which shows that any large finitely accessible category $\ck$ admits an analogue of the Ehrenfeucht-Mostowski functor:
a faithful functor $E:\Lin\to\ck$ preserving directed colimits, where $\Lin$ is the category of linearly ordered sets and order preserving injective mappings.
In fact, following \cite{BR} 4.4, $E$ preserves sizes starting from some cardinal, thereby ensuring LS-accessibility.  It is not known whether every large accessible category with directed colimits admits an EM-functor (and is consequently LS-accessible), although \cite{BR} contains a couple of results in this direction. We give another partial result here, proving that this is true if, as in AECs, the morphisms in $\ck$ are assumed to be monomorphisms.  To begin:

\begin{propo}\label{prop2.4}
Let $\ck$ be an accessible category with directed colimits whose morphisms are monomorphisms. Then there is a faithful functor $F:\cl\to\ck$ preserving
directed colimits with $\cl$ finitely accessible. 
\end{propo}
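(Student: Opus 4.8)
The plan is to realise $\ck$ and a suitable finitely accessible category as free colimit‑completions of one and the same small category, and then to deduce faithfulness of the comparison functor from the hypothesis that every $\ck$‑morphism is monic.

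Fix a regular cardinal $\lambda$ for which $\ck$ is $\lambda$‑accessible, and let $\ca$ be a small full subcategory of $\ck$ whose objects are $\lambda$‑presentable and such that every object of $\ck$ is a $\lambda$‑directed colimit of objects of $\ca$ (a skeleton of the full subcategory of $\lambda$‑presentable objects will do). Let $\cl=\Ind(\ca)$ be the free completion of $\ca$ under directed (equivalently, filtered) colimits. By standard facts about ind‑completions (see \cite{MP} or \cite{AR}), $\cl$ is finitely accessible, the canonical functor $\ca\to\cl$ is full and faithful with image contained in the finitely presentable objects, and every object of $\cl$ is a directed colimit of objects of $\ca$. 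Since $\ck$ has all directed colimits, the universal property of $\cl=\Ind(\ca)$ then yields a functor $F\colon\cl\to\ck$, unique up to natural isomorphism, that preserves directed colimits and restricts (along $\ca\to\cl$) to the inclusion $\ca\hookrightarrow\ck$; concretely, $F$ carries a directed colimit $\colim_i A_i$ of a diagram in $\ca$ to the colimit of that diagram computed in $\ck$. It remains only to check that $F$ is faithful.

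First reduce to the case of a domain in $\ca$. Any $X\in\cl$ is a directed colimit $\colim_{i\in I}A_i$ with $A_i\in\ca$, and $F$ preserves this colimit, so for every $Y\in\cl$ we have $\cl(X,Y)=\lim_i\cl(A_i,Y)$ and $\ck(FX,FY)=\lim_i\ck(A_i,FY)$, with $F_{X,Y}$ the limit of the maps $F_{A_i,Y}$; hence it suffices to show that $F_{A,Y}\colon\cl(A,Y)\to\ck(A,FY)$ is injective for $A\in\ca$ and $Y\in\cl$. Write $Y=\colim_{j\in J}B_j$ with $B_j\in\ca$ and $J$ directed. Since $A$ is finitely presentable in $\cl$ and $\ca$ is full in $\ck$, we have $\cl(A,Y)=\colim_{j}\ck(A,B_j)$, and an element of this colimit is represented by some $h\colon A\to B_j$. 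Suppose $h\colon A\to B_j$ and $h'\colon A\to B_{j'}$ have the same image under $F$, i.e.\ $\psi_j h=\psi_{j'}h'$, where $\psi_j\colon B_j\to FY$ are the coprojections of the colimit $FY=\colim_j B_j$ taken in $\ck$ (these are genuine $\ck$‑morphisms, since $F$ preserves this colimit). Pick $j''\geq j,j'$ in $J$, with transition maps $\beta\colon B_j\to B_{j''}$ and $\beta'\colon B_{j'}\to B_{j''}$; then $\psi_{j''}\beta h=\psi_j h=\psi_{j'}h'=\psi_{j''}\beta'h'$. This is the one place the hypothesis enters: every $\ck$‑morphism, in particular $\psi_{j''}$, is a monomorphism, so $\beta h=\beta'h'$, and hence $h$ and $h'$ already represent the same element of $\colim_j\ck(A,B_j)=\cl(A,Y)$. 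Thus $F_{A,Y}$ is injective, and $F$ is faithful.

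I do not expect a genuine obstacle: the existence and directed‑colimit‑preservation of $F$, and the finite accessibility of $\Ind(\ca)$, are standard facts about accessible categories and ind‑completions. The hypothesis that all morphisms of $\ck$ are monic is used only to cancel the coprojection $\psi_{j''}$ in the last step — which is exactly where it must be used, since $F$ is in general not full and, in the absence of the mono hypothesis, need not be faithful either.
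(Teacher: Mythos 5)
Your proposal is correct and follows essentially the same route as the paper: take $\cl=\Ind(\cc)$ for $\cc$ a small full subcategory of $\lambda$-presentable objects, obtain $F$ from the universal property of the free completion under directed colimits, and verify faithfulness by reducing to hom-sets out of objects of $\cc$ and cancelling a monomorphism into the colimit. The only (cosmetic) difference is in the bookkeeping: the paper distinguishes two given distinct morphisms of $\cl$ by a pair $f'\neq g'$ in $\cc$ and cancels the single map $Fv$, whereas you make the $\lim\colim$ description of $\Ind(\cc)$-hom-sets explicit and cancel the colimit coprojection $\psi_{j''}$ --- the monomorphism hypothesis enters at exactly the same point in both arguments.
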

\begin{proof} Let $\ck$ be a $\lambda$-accessible category with directed colimits and $\cc$ be its representative full subcategory consisting
of $\lambda$-presentable objects; this means that any $\lambda$-presentable object of $\ck$ is isomorphic with $C\in\cc$. Let $\cl$ be a 
free completion of $\cc$ under directed colimits. Then $\cl$ is finitely accessible (see \cite{AR} 2.26) and, since $\ck$ has directed colimits,
there is a functor $F:\Ind(\cc)\to\ck$ preserving directed colimits. It suffices to show that $F$ is faithful.
Consider two distinct morphisms $f,g:K\to L$ in $\ck$. Then there are morphisms $f',g':A\to B$ in $\cc$ and morphisms
$u:A\to K$, $v:B\to L$ such that $fu=vf'$ and $gu=vg'$. Since $F$ is faithful on $\cc$ (because it is the identity on $\cc$),
$Ff',Fg'$ are distinct. Since $Fv$ is a monomorphism, $Ff,Fg$ are distinct.
\end{proof}

\begin{rem}\label{re2.5}
{
\em
Any finitely accessible category $\cl$ is $L_{\kappa,\omega}$-axiomatizable and the functor $F$ is surjective on objects.  Consequently, Proposition \ref{prop2.4}
may be regarded as an analogue of Shelah's Presentation Theorem for AECs: omitting types can be expressed in $L_{\kappa,\omega}$, and the reduct involved in the Presentation Theorem is surjective on models in the AEC.
}
\end{rem}

\begin{coro}\label{cor2.6}
Any large accessible category with directed colimits whose morphisms are monomorphisms is LS-accessible.
\end{coro}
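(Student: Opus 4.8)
The plan is to reduce to the already-cited fact that every large finitely accessible category is LS-accessible, transporting that conclusion along the functor supplied by Proposition \ref{prop2.4}. So let $\ck$ be large, accessible with directed colimits, and with all morphisms monic. First I would apply Proposition \ref{prop2.4} to obtain a faithful functor $F:\cl\to\ck$ preserving directed colimits with $\cl$ finitely accessible; from the construction ($\cl=\Ind(\cc)$ with $\cc$ the $\lambda$-presentable objects of $\ck$, and $F$ the identity on $\cc$), and as noted in Remark \ref{re2.5}, $F$ is surjective on objects and carries finitely presentable objects of $\cl$ to $\lambda$-presentable objects of $\ck$. Next I would observe that $\cl$ must be large: accessible categories are locally small, so if $\cl$ were equivalent to a small category it would have only a set of isomorphism types of objects, and then (since functors preserve isomorphisms and $F$ is surjective on objects) $\ck$ would have only a set of isomorphism types and hence, being locally small, would be equivalent to a small category, contradicting largeness of $\ck$. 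Thus $\cl$ is a large finitely accessible category, so by \cite{MP} 3.4.1 it admits an EM-functor $E:\Lin\to\cl$ that is faithful and preserves directed colimits, and by \cite{BR} 4.4 this $E$ preserves sizes from some cardinal $\lambda_0$ onwards; in particular $\cl$ is LS-accessible.

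Now I would pass to the composite $G=FE:\Lin\to\ck$, which is again faithful and preserves directed colimits, and the goal becomes to show that $G$ also preserves sizes from some cardinal onwards: once that is known, $G$ applied to a linear order of size $\mu$ yields an object of $\ck$ of size $\mu$ for every large $\mu$, which is exactly LS-accessibility of $\ck$. The easy half is the upper bound on $|GX|$. For a linear order $X$ with $|X|=\mu$ large, $E(X)$ has size $\mu$, hence, being an object of the finitely accessible category $\cl$, is a directed colimit of at most $\mu$ finitely presentable objects; since $F$ sends these to $\lambda$-presentable objects of $\ck$ and preserves directed colimits, $GX$ is a $\mu$-indexed directed colimit of $\lambda$-presentable objects, so $GX$ is $\mu^+$-presentable and $|GX|\le\mu$.

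The delicate half, and what I expect to be the main obstacle, is the reverse inequality: one must rule out that $G$ collapses sizes, i.e. show $GX$ is not $\mu$-presentable, so that $|GX|\ge\mu$. Faithfulness of $G$ alone does not obviously reflect presentability ranks, so the point is to rerun the argument underlying \cite{MP} 3.4.1 and \cite{BR} 4.4 with $G$ in place of the intrinsic EM-functor: a faithful, directed-colimit-preserving functor from $\Lin$ into an accessible category that has all directed colimits cannot eventually decrease sizes. Here the monomorphism hypothesis on $\ck$ (already used in Proposition \ref{prop2.4}) and the presence of directed colimits in $\ck$ are what allow the bookkeeping of that argument to go through — essentially, the many order-embeddings between linear orders of a fixed large cardinality, together with faithfulness of $G$ and the fact that directed colimits along monos do not shrink objects, force $GX$ to genuinely require a $\mu$-directed colimit. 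Granting this, $|GX|=|X|$ for all sufficiently large $X$, so $\ck$ has objects of every size $\mu\ge\max(\lambda_0,\lambda)$ and is therefore LS-accessible.
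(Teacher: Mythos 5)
Your proof is correct and follows essentially the same route as the paper: factor through the finitely accessible $\cl$ of Proposition \ref{prop2.4}, observe that $\cl$ is large (the paper asserts this; your justification via surjectivity of $F$ on objects is the right one), and compose the EM-functor of \cite{MP} 3.4.1 with $F$. The ``delicate half'' you defer at the end is exactly what the paper obtains by applying \cite{BR} 4.4 directly to the composite $FE:\Lin\to\ck$ -- a faithful, directed-colimit-preserving functor into an accessible category with directed colimits -- so no separate rerunning of that argument is required.
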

\begin{proof}
If $\ck$ is a large accessible category with directed colimits whose morphisms are monomorphisms then $\cl$ in \ref{prop2.4} is large as well.
Following \cite{MP}, 3.4.1, there is a faithful functor $E:\Lin\to\cl$ preserving directed colimits. The composition $FE:\Lin\to\ck$ is faithful and preserves directed colimits. Following \cite{BR}, 4.4, $FE$ preserves sizes starting from some cardinal. Thus $\ck$ is LS-accessible. 
\end{proof}

\begin{rem}\label{re2.7}
{
\em
This yields the promised EM-functor: given any accessible category with directed colimits $\ck$ whose morphisms are monomorphisms, there is a faithful functor from $\Lin$ to $\ck$ that preserves directed colimits and sufficiently large sizes.  Note that the proof of its existence relies neither on coherence nor on the Presentation Theorem, in contrast to AECs.   
}
\end{rem}

%\begin{propo}\label{prop2.6}
%Let $\ck$ be a large $\lambda$-accessible category with directed colimits, the amalgamation property and the joint embedding %property whose morphisms are monomorphisms. Then a $\lambda$-saturated object cannot be $\lambda$-presentable.
%\end{propo}   
%\begin{proof}
%Let $K$ be a $\lambda$-saturated $\lambda$-presentable object. We will show that any object 
%\end{proof} 

\section{Accessible categories with concrete directed colimits}

We now begin to add the structure necessary to develop a meaningful classification theory for accessible categories, beginning with the essential step of systematically associating sets with the objects of our categories.  In particular, we say that $(\ck,U)$ is an \textit{accessible category with concrete directed colimits} if $\ck$ is an accessible category with directed colimits 
and $U:\ck\to\Set$ is a faithful functor to the category of sets preserving directed colimits. 

Any large accessible category with concrete directed colimits is LS-acce\-ssib\-le (see \cite{BR} 4.12).  While one would not go far wrong in thinking of $UK$ as the ``underlying set'' of an object $K$, it is important to note that the size of $K$ in $\ck$, as defined in Statement \ref{st2.0}, need not correspond to the cardinality of $UK$.  That is, sizes need not be preserved by $U$.  If, however, $U$ \textit{reflects split epimorphisms}---whenever $U(f)g=\id$ then $fg^\prime=\id$ 
for some $g^\prime$---it does in fact preserve sizes starting from some cardinal (see \cite{BR} 4.3 and 3.7). The smallest cardinal $\lambda\geq\lambda_\ck$
with this property will be denoted by $\lambda_U$. This means that $\ck$ has objects of all sizes $\mu\geq\lambda_U$ and these sizes are equal
to cardinalities of underlying sets.

There is a more familiar condition that will achieve the same effect: it suffices to assume $(\ck,U)$ satisfies a generalization of the coherence axiom for AECs.

\begin{defi}\label{def3.1}
{
\em
Let $(\ck,U)$ be an accessible category with concrete directed colimits. We say that $\ck$ is \textit{coherent} if for each commutative triangle 
$$
\xymatrix@=3pc{
UA \ar[rr]^{U(h)}
\ar[dr]_{f} && UC\\
& UB \ar[ur]_{U(g)}
}
$$
there is $\overline{f}:A\to B$ in $\ck$ such that $U(\overline{f})=f$.
}
\end{defi}

If all morphisms in $\ck$ are monomorphisms, $U$ reflects split epimorphisms if and only if it is \textit{conservative}, i.e. if it reflects isomorphisms
(\cite{BR} 3.5). Coherence is a much stronger condition---if we are concerned only with the preservation of sizes, we can get away with less.

Without any loss of generality, we can pass from accessible categories with concrete
directed colimits to accessible categories with concrete directed colimits whose morphisms are \textit{concrete monomorphisms}, i.e. monomorphisms preserved 
by $U$. 
 
\begin{rem}\label{re3.2}
{
\em
(1) Let $(\ck,U)$ be an accessible category with concrete directed colimits. Consider a pullback
$$
\xymatrix@=4pc{
\ck \ar[r]^{U} & \Set \\
\ck_0 \ar [u]^{G} \ar [r]_{U_0} &
\Emb(\Set) \ar[u]_{}
}
$$
where $\Emb(\Set)$ is the category of sets and monomorphisms. Then $(\ck_0,U_0)$ is an accessible category with concrete directed colimits
(see \cite{MP} 5.1.6 and 5.1.1) whose morphisms are monomorphisms preserved by $U_0$. The functor $G$ is faithful and preserves directed colimits.
Moreover, $(\ck_0,U_0)$ is coherent provided that $(\ck,U)$ is coherent. Thus, for $(\ck,U)$ coherent, $G$ preserves presentability ranks starting 
from some cardinal.

(2) If $\ck$ is large then $\ck_0$ is large as well and, following \ref{re2.7}, there is an EM-functor $E:\Lin\to\ck_0$. The composition $GE$ 
is the EM-functor for $\ck$.
}
\end{rem}

\begin{rem}\label{re3.3}
{
\em
Assume that there are arbitrarily large regular cardinals $\lambda$ such that $\lambda^{<\lambda}=\lambda$.
Let $(\ck,U)$ be a large accessible category with concrete directed colimits, the amalgamation property and the joint embedding property and whose
morphisms are concrete monomorphisms. Let $K$ be a $\lambda$-saturated $\lambda^+$-presentable object with $\lambda_U\leq\lambda$. We will show
that $|K|=\lambda$. Thus, following \ref{st2.1}, $\ck$ contains arbitraily large saturated objects.

Let $A$ be an object of size $\lambda$. Then $A$ is a colimit of a smooth chain of cardinality $\lambda$ consisting of $\lambda$-presentable objects
(see \cite{R1}, Lemma 1). Since $K$ is $\lambda$-saturated, there is a morphism $h:A\to K$. Since $h$ is a concrete monomorphism, we have
$$
|K|=|UK|\geq|UA|=|A|=\lambda.
$$
Thus $|K|=\lambda$. 
}
\end{rem}

%\begin{rem}\label{re3.4}
%{
%\em
%Let $(\ck,U)$ be an accessible category with concrete directed colimits and the amalgamation property. Let $\ck_c$ be a new concrete category having 
%the same objects as $\ck$ but morphisms $A\to B$ in $\ck_c$ are mappings $f:UA\to UB$ such that $U(g)f=U(h)$ for some $\ck$-morphisms $g:B\to C$ and $h:A\to C$. 
%We have to verify that $\ck_c$ is a category. Consider $f_1:A_1\to A_2$ and $f_2:A_2\to A_3$ in $\ck_c$. There are $g_1:A_2\to C_1$ and $g_2:A_3\to C_2$
%such that $U(g_1)f_1=U)h_1)$ and $U)g_2)f_2=U(h_2)$. Using amalgamation, there are morphisms $u_1:C_1\to C$ and $u_2:C_2\to C$ such that $u_1g_1=u_2h_1$.
%Thus
%$$
%U(u_2h_2)f_2f_1=U(u_2h_1f_1)=U(u_1g_1)f_1=U(u_2h_1).
%$$
%Hence $f_2f_1$ carries a $\ck_c$-morphism. Clearly, $\ck$ is a concrete subcategory of $\ck_c$ and $\ck_c$ has the amalgamation property.
%We will show that $\ck_c$ has concrete directed colimits. Following \cite{AR} Corollary and Remark 1.7, it suffices to show that $\ck$
%has concrete colimits of smooth chains.
%}
%\end{rem} 

\begin{lemma}\label{le3.4}
Any abstract elementary class is a coherent accessible category with concrete directed colimits. 
\end{lemma}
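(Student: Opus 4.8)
The plan is to present the AEC $(\ck,\prec_{\ck})$ over its signature $L$ as the category whose objects are the $L$-structures in $\ck$ and whose morphisms $f\colon M\to N$ are the $\ck$-embeddings, i.e.\ the $L$-embeddings whose image $f(M)$, with the induced $L$-structure, satisfies $f(M)\prec_{\ck}N$; the functor $U\colon\ck\to\Set$ sends a structure to its underlying set and a $\ck$-embedding to itself regarded as a function. Faithfulness of $U$ is then immediate, since a morphism is literally a function on underlying sets. For directed colimits and concreteness I would invoke the standard strengthening of the AEC union axiom from $\prec_{\ck}$-chains to arbitrary $\prec_{\ck}$-directed systems: after replacing any directed diagram of $\ck$-embeddings by an isomorphic diagram of $\prec_{\ck}$-substructures and inclusions, the union axiom says its colimit in $\ck$ exists, is carried by $\bigcup$ of the underlying sets, and has all legs $\ck$-embeddings. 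This is exactly closure of $\ck$ under directed colimits together with preservation of those colimits by $U$; these facts (along with accessibility below) are already recorded in \cite{BR} and \cite{L}.

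For accessibility I would fix $\lambda=\mathrm{LS}(\ck)$ and show $\ck$ is $\lambda^+$-accessible. First, every $M\in\ck$ with $|M|\le\lambda$ is $\lambda^+$-presentable: given a $\ck$-morphism $f\colon M\to N$ into a $\lambda^+$-directed colimit $\langle\phi_i\colon N_i\to N\rangle$, the function $U(f)$ factors through some $U(N_i)$ as $U(f)=U(\phi_i)\circ g$ because $|UM|\le\lambda$, and one checks that $g$ underlies a $\ck$-morphism $M\to N_i$ --- injectivity and the $L$-embedding property are read off through $\phi_i$, and strength of the image uses the coherence axiom of AECs applied inside $N$ --- while essential uniqueness is inherited from $\Set$. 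Second, by the L\"owenheim--Skolem axiom every $M\in\ck$ is the union of its $\prec_{\ck}$-substructures of size $\le\lambda$; the coherence axiom shows that any $\le\lambda$ of these sit strongly inside a common one of size $\le\lambda$, so they form a $\lambda^+$-directed poset of $\lambda^+$-presentable objects whose colimit, computed on sets by the union axiom, is $M$. Hence $\ck$ is $\lambda^+$-accessible.

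The core of the argument is coherence in the sense of Definition \ref{def3.1}. Given objects $A,B,C$ of $\ck$, $\ck$-morphisms $g\colon B\to C$, $h\colon A\to C$, and a function $f\colon UA\to UB$ with $U(g)\circ f=U(h)$, i.e.\ $g\circ f=h$ on underlying sets, I must produce $\overline f\colon A\to B$ in $\ck$ with $U(\overline f)=f$; since $U$ is faithful this forces $\overline f=f$, so the content is precisely that the bare function $f$ is a $\ck$-embedding. It is injective because $h=g\circ f$ is, and it is an $L$-embedding because $g$ is one and $g\circ f=h$ is one (chase atomic formulas through $g$, using that $g$ is injective). Finally, inside $C$ we have $g(f(A))=h(A)\prec_{\ck}C$ and $g(f(A))\subseteq g(B)\prec_{\ck}C$, so the coherence axiom of AECs gives $g(f(A))\prec_{\ck}g(B)$; transporting along the isomorphism of $L$-structures $B\cong g(B)$ induced by $g$, and using isomorphism-invariance of $\prec_{\ck}$, yields $f(A)\prec_{\ck}B$, so $f$ is a $\ck$-morphism. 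The only real obstacle here is this last bookkeeping --- promoting a function to a strong embedding --- which is dispatched by exactly the axiom the property is named after; it is worth noting that the same axiom is also what glues the L\"owenheim--Skolem substructures into a directed diagram, so coherence is unavoidable in the accessibility step as well.
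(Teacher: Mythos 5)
Your proposal is correct and follows essentially the same route as the paper: you verify coherence in the sense of Definition \ref{def3.1} by first lifting the function $f$ to an $L$-embedding (the coherence of the forgetful functor on $\Sigma$-structures) and then invoking the AEC coherence axiom to promote it to a $\ck$-morphism, which is exactly the paper's two-step argument. The only difference is that you spell out the accessibility and directed-colimit facts that the paper delegates to \cite{BR} and \cite{L}; those details are standard and correctly executed.
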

\begin{proof}
Let $\ck$ be an abstract elementary class. Then there is a finitary signature $\Sigma$ such that $\ck$ is a subcategory of the category
$\Emb(\Sigma)$ of $\Sigma$-structures whose morphisms are substructure embeddings. Moreover, the inclusion $\ck\to\Emb(\Sigma)$ preserves
directed colimits and, whenever $fg$ and $f$ are morphisms in $\ck$ then $g$ is a morphism in $\ck$. Since the forgetful functor 
$U:\Emb(\Sigma)\to\Set$ is coherent and preserves directed colimits, its restriction to $\ck$ has the same properties.
\end{proof}

The precise relationship between AECs and accessible ca\-te\-go\-ries was clarified in \cite{L} and \cite{BR}. Recall that an accessible category
with directed colimits whose morphisms are monomorphisms is equivalent to an abstract elementary class if and only if it admits a coherent iso-full
embedding into a finitely accessible category preserving directed colimits and monomorphisms (\cite{BR})---roughly speaking, an ambient category of structures. We will improve on this characterization by giving an entirely language-independent 
description of AECs, axiomatized entirely in terms of properties of $\ck$ and $U:\ck\to\Set$. Among other things, this shows that a coherent accessible category with concrete directed colimits whose morphisms are monomorphisms preserved by $U$ is very close to being an abstract elementary class. 

To achieve this, we note that $U:\ck\to\Set$ itself determines a canonical signature $\Sigma_\ck$ for working with $(\ck,U)$:
 
\begin{rem}\label{re3.5}
{
\em
Let $(\ck,U)$ be a $\lambda$-accessible category with concrete directed colimits whose morphisms are monomorphisms preserved by $U$. For a finite 
cardinal $n$ we denote by $U^n$ the functor
$$
\Set(n,U(-)):\ck\to\Set.
$$
Directed colimits preserving subfunctors of $U^n$ will be called \textit{finitary relation symbols interpretable in} $\ck$ and natural
transformations $h:U^n\to U$ will be called \textit{finitary function symbols interpretable in} $\ck$ (cf. \cite{R0}). Since they are both
determined by their restrictions to the full subcategory $\ck_\lambda$ of $\ck$ consisting of $\lambda$-presentable objects, there is only
a set of such symbols. They form the signature $\Sigma_\ck$, which we call the \textit{canonical signature} of $\ck$. We get
a \textit{canonical functor} $G:\ck\to\Str(\Sigma_\ck)$ into $\Sigma_\ck$-structures where morphisms are homomorphisms. 

Given a bijection $f:UA\to UB$, we get bijections $f^n:(UA)^n\to (UB)^n$. Assume that $R(f)$ is a bijection for each $R$ and $h_Bf^n=fh_A$
for each $h$ above. This means that $f:GA\to GB$ is an isomorphism. We say that $\ck$ is \textit{iso-full} if $f=U(\overline{f})$ for each such $f$.

There is a largest subsignature $\Sigma_0$ of $\Sigma_\ck$ such that the induced functor $G_0:\ck\to\Str(\Sigma_0)$ has image in $\Emb(\Sigma_0)$ (because this property is closed under union of subsignatures and is satisfied by the empty signature). Now, $(\ck,U)$ is an abstract elementary class if and only if $G_0$ is iso-full.
}
\end{rem}

\begin{rem}\label{re3.6}
{
\em
(1) If $(\ck,U)$ is an abstract elementary class then $\lambda_U$ is its L\"{o}wenheim-Skolem number.

(2) We say that an accessible category $(\ck,U)$ with concrete directed colimits is \textit{finitely coherent} if for each $f:UA\to UB$ 
with the property that for any finite set $X$ and any mapping $a:X\to UA$ there are $h:B\to C$ and $g:A\to C$ with $U(h)fa=U(g)a$, $f$ carries a $\ck$-morphism,
i.e., $f=U(\overline{f})$ for some $\overline{f}:A\to B$.

Any finitely coherent accessible category with concrete directed colimits is coherent. We now show that an abstract elementary class  with the amalgamation
property is finitary in the sense of \cite{HK} if and only if the corresponding $(\ck,U)$ is finitely coherent. 

Let $\ck$ be finitary and consider  $f:UA\to UB$ such that for any finite set $X$ and any mapping $a:X\to UA$ there are $h:B\to C$ and $g:A\to C$ 
with $U(h)fa=U(g)a$. Then $f$ carries a $\ck$-morphism and thus $(\ck,U)$ is finitely coherent. Conversely, let $(\ck,U)$ be finitely coherent.
Then $\ck$ is finitary in the sense of \cite{K}, which is equivalent, assuming the amalgamation property, to being finitary in the sense of \cite{HK}.
}
\end{rem}
 
%\begin{rem}\label{re3.7}
%{
%\em
%Let $\ck$ be a large accessible category with concrete directed colimits. Following \cite{BR} 4.13, there is a faithful functor $E:\Lin\to\ck$
%preserving directed colimits. In fact, we can use \ref{re3.3} and \ref{re2.4} for the same purpose. 
%}
%\end{rem}

\section{Galois types}
Galois types were introduced for AECs by Shelah (cf. \cite{B}) and his definition can be translated directly into the framework of accessible categories
with concrete directed colimits.

\begin{defi}\label{def4.1}
{
\em
Let $(\ck,U)$ be an accessible category with concrete directed colimits. A \textit{type} is a pair $(f,a)$ where $f:M\to N$ and $a\in UN$.

The types $(f_0,a_0)$ and $(f_1,a_1)$ are called equivalent if there are morphisms $h_0:N_0\to N$ and $h_1:N_1\to N$ such that 
$h_0f_0=h_1f_1$ and $U(h_0)(a_0)=U(h_1)(a_1)$.
}
\end{defi}

Assuming the amalgamation property we get an equivalence relation.  As in AECs, the resulting equivalence classes are called \textit{Galois types}.  In the remainder of the paper, we make the following blanket assumption: 

\begin{assump}\label{blanket}
{\em All categories satisfy the amalgamation and joint embedding properties.}\end{assump}

In many of the results that follow, beginning with the following lemma, we invoke the existence of a monster objects in our category.  In each case, we clearly indicate the large cardinal assumption required to ensure their existence: per Remark \ref{re2.3}, it suffices to assume there is a proper class of cardinals $\lambda$ with $\lambda^{<\lambda}=\lambda$.

\begin{lemma}\label{le4.2}
Suppose there is a proper class of cardinals $\lambda$ with $\lambda^{<\lambda}=\lambda$.  Let $(\ck,U)$ be an accessible category with concrete directed colimits. Then 
types $(f_0,a_0)$  and $(f_1,a_1)$ are equivalent if and only if there is a $\lambda$-saturated, $\lambda^+$-presentable object $L$ (for some regular
cardinal $\lambda$), morphisms $g_0:N_0\to L$, $g_1:N_1\to L$ and an isomorphism $s:L\to L$ such that $sg_0f_0=g_1f_1$ and $U(sg_0)(a_0)=U(g_1)(a_1)$.
\end{lemma}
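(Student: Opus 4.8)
The plan is to prove the two implications separately, using the monster objects provided by Statements \ref{st2.1} and \ref{st2.2} as a common target into which everything maps.

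For the ``if'' direction, suppose we are given such an $L$, $g_0$, $g_1$, and $s$. Setting $h_0 = sg_0 : N_0 \to L$ and $h_1 = g_1 : N_1 \to L$, the hypotheses $sg_0f_0 = g_1f_1$ and $U(sg_0)(a_0) = U(g_1)(a_1)$ say exactly that $h_0f_0 = h_1f_1$ and $U(h_0)(a_0) = U(h_1)(a_1)$, which is the definition of equivalence of $(f_0,a_0)$ and $(f_1,a_1)$. This direction is essentially a triviality once the data is unpacked.

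For the ``only if'' direction, assume $(f_0,a_0)$ and $(f_1,a_1)$ are equivalent, witnessed by $h_0 : N_0 \to N$, $h_1 : N_1 \to N$ with $h_0f_0 = h_1f_1$ and $U(h_0)(a_0) = U(h_1)(a_1)$. First I would choose, using the proper class of cardinals $\lambda$ with $\lambda^{<\lambda} = \lambda$, a regular cardinal $\lambda$ large enough that $N$, $N_0$, $N_1$ are all $\lambda^+$-presentable and such that the conclusion of Statement \ref{st2.1} applies at $\lambda$; concretely, pick $\lambda$ above the sizes of $N$, $N_0$, $N_1$ and in the class guaranteed by \ref{st2.1}. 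Then by Statement \ref{st2.1} there is a morphism $g : N \to L$ with $L$ a $\lambda^+$-presentable $\lambda$-saturated object. Now consider $g_0 := gh_0 : N_0 \to L$ and the auxiliary morphism $gh_1 : N_1 \to L$; I also need a morphism $N_1 \to L$ that I will call $g_1$, which I can simply take to be $gh_1$ as well. Applying Statement \ref{st2.2} with $K_1 = K_2 = N_1$, the morphisms $g_0' := g_1 = gh_1$ and $g_1' := gh_1$ (both from $N_1$ to $L$), and $f = \id_{N_1}$, I obtain an isomorphism $s : L \to L$ with $s \circ (gh_1) = (gh_1) \circ \id$, i.e.\ $s = \id$ on the relevant image --- but this is not quite the application I want. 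The correct move is to use \ref{st2.2} to adjust $g_0$ and $g_1$ so that they agree after precomposition with $f_0$, $f_1$; since $gh_0 f_0 = gh_1 f_1$ already, I can in fact take $g_0 = gh_0$, $g_1 = gh_1$, and $s = \id_L$, and then $sg_0f_0 = gh_0f_0 = gh_1f_1 = g_1f_1$ and $U(sg_0)(a_0) = U(g)U(h_0)(a_0) = U(g)U(h_1)(a_1) = U(g_1)(a_1)$, as required.

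The main subtlety --- and the point I would be most careful about --- is precisely why Statement \ref{st2.2}, and not merely \ref{st2.1}, is needed in the general formulation: if one starts from equivalence witnessed by an arbitrary $N$ of arbitrary size, the maps $g_0, g_1$ into $L$ produced by first mapping into a common $N$ and then into $L$ already commute appropriately, so $s$ can be taken to be the identity; however, the lemma is stated so as to also cover the situation where one begins with morphisms $g_0 : N_0 \to L$, $g_1 : N_1 \to L$ into a fixed monster $L$ that do \emph{not} already satisfy $g_0f_0 = g_1f_1$ on the nose, and there \ref{st2.2} (applied with a suitable $f$ relating $N_0$ and $N_1$, or with $f$ built from the equivalence data) supplies the corrective automorphism $s$. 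I would therefore organize the ``only if'' proof to first pass to a common $N$, then map into a monster via \ref{st2.1}, taking $s = \id_L$, and remark that the more flexible statement with a nontrivial $s$ follows by composing with the isomorphisms furnished by \ref{st2.2} when one wishes to land in a prescribed monster object. The care required is entirely bookkeeping about presentability ranks and which cardinal $\lambda$ to fix; there is no genuine obstacle beyond ensuring $\lambda$ is chosen simultaneously large enough for $N$, $N_0$, $N_1$ to be $\lambda^+$-presentable and to lie in the class given by \ref{st2.1}.
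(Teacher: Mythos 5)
Your proposal is correct for the statement as literally written, and it shares the paper's overall strategy: amalgamate over the common witness $N$ and then embed into a monster object supplied by Statement \ref{st2.1}. The genuine difference is in how the isomorphism $s$ is produced. You take $g_0=gh_0$, $g_1=gh_1$ and $s=\id_L$, which does verify the bare existential claim (and makes Statement \ref{st2.2} unnecessary). The paper instead fixes morphisms $g_0:N_0\to L$, $g_1:N_1\to L$ and $g:N\to L$, applies Statement \ref{st2.2} twice to get isomorphisms $s_0,s_1:L\to L$ with $s_0g_0=gh_0$ and $s_1g_1=gh_1$, and sets $s=s_1^{-1}s_0$; one then checks $sg_0f_0=s_1^{-1}gh_0f_0=s_1^{-1}gh_1f_1=g_1f_1$ and $U(sg_0)(a_0)=U(g_1)(a_1)$. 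What this buys is exactly the strengthening you flag at the end: $s$ can be found for \emph{any prescribed} $g_0$ and $g_1$ into a given monster (Remark \ref{re4.3}), and it is this stronger form --- not the bare existential one --- that is invoked in the proof of Proposition \ref{prop6.2}, where $g_2f_i$ and $u_i$ are already fixed. Your closing paragraph sketches essentially the paper's $s_1^{-1}s_0$ argument, so there is no gap in substance, but if the lemma is to carry its downstream weight you should promote that sketch to the main proof rather than leaving $s=\id_L$ as the official construction. The false start in the middle of your ``only if'' paragraph (applying Statement \ref{st2.2} with $K_1=K_2=N_1$ and $f=\id$) should simply be deleted; it yields nothing and you correctly abandon it.
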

\begin{proof}
Sufficiency is evident because $sg_0,g_1$ provide $h_0,h_1$. Assume that the types are equivalent via $h_0:N_0\to N$ and $h_1:N_1\to N$. 
There is a regular cardinal $\lambda$ such that $\ck$ is $\lambda$-accessible, weakly $\lambda$-stable and $M,N_0,N_1,N$ are $\lambda$-presentable. 
Following \ref{st2.1} and \ref{st2.2}, there is a $\lambda$-saturated and $\lambda^+$-presentable object $L$ equipped with 
morphisms $g_0:N_0\to L$, $g_1:N_1\to L$ and $g:N\to L$ and isomorphisms $s_0,s_1:L\to L$ such that $g_0f_0=g_1f_1$, $s_0g_0=gh_0$ and $s_1g_1=gh_1$. 
Then $U(s_0g_0)(a_0)=U(s_1g_1)(a_1)$ and thus $U(s_1^{-1}s_0g_0)(a_0)=U(g_1)(a_1)$.
\end{proof}

\begin{rem}\label{re4.3}
{
\em
In fact, in the situation of \ref{le3.4}, $s$ exists for any given $g_0$ and $g_1$.
}
\end{rem}

\section{Tameness}

Like Galois types, tameness can be easily redefined in the framework of accessible categories with concrete directed colimits.
 
\begin{defi}\label{def5.1}
{\em 
Let $(\ck,U)$ be an accessible category with concrete directed colimits and $\kappa$ a regular cardinal. We say that $\ck$ is $\kappa$-\textit{tame} 
if for two non-equivalent types $(f,a)$ and $(g,b)$ there is a morphism $h:X\to M$ with $X$ $\kappa$-presentable such that the types $(fh,a)$ 
and $(gh,b)$ are not equivalent.

$\ck$ is called \textit{tame} if it is $\kappa$-tame for some regular cardinal $\kappa$.
}
\end{defi}

Recall that a cardinal $\kappa$ is called \textit{strongly compact} if, for any set $S$, every $\kappa$-complete filter over $S$ can be extended
to a $\kappa$-complete ultrafilter over $S$ (see \cite{J}).  In what fo\-llows $(C)$ will
denote the existence of a proper class of strongly compact cardinals. 

\begin{theo}\label{th5.2}
Assuming $(C)$, any accessible category with concrete directed colimits is tame.
\end{theo}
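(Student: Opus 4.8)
The plan is to fix a regular cardinal $\lambda$ for which $\ck$ is $\lambda$-accessible, use $(C)$ to choose a strongly compact cardinal $\kappa>\lambda$, and prove that $\ck$ is $\kappa$-tame; since $\ck$ has all directed colimits it is automatically $\kappa$-accessible. The one substantive ingredient is that a $\kappa$-accessible category is closed under $\cu$-ultraproducts for every $\kappa$-complete ultrafilter $\cu$, by \cite{MP}, 5.5.1; this is the categorical replacement for the ultraproduct construction in Boney's argument \cite{B}. Because $U$ preserves directed colimits, it will send such ultraproducts to ordinary ultraproducts of sets (and canonical maps to diagonal maps), so the ``$a$'' and ``$b$'' data transport correctly.

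I would then argue by contraposition. Let $(f,a)$ and $(g,b)$ be types over $M$, say $f\colon M\to N_1$ with $a\in UN_1$ and $g\colon M\to N_2$ with $b\in UN_2$, and assume that $(fh,a)$ and $(gh,b)$ are equivalent for every $h\colon X\to M$ with $X$ $\kappa$-presentable; the goal is to produce a single amalgam witnessing $(f,a)\equiv(g,b)$. Present $M$ as a $\kappa$-directed colimit $\langle h_i\colon X_i\to M\rangle_{i\in I}$ of $\kappa$-presentable objects. For each $i$ the hypothesis supplies $p_i\colon N_1\to P_i$ and $q_i\colon N_2\to P_i$ with $p_if h_i=q_ig h_i$ and $U(p_i)(a)=U(q_i)(b)$; moreover, since $h_i=h_jx_{ij}$ for the transition map $x_{ij}$ whenever $i\le j$, the same pair $(p_j,q_j)$ satisfies $p_jf h_i=q_jg h_i$ for all $i\le j$. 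The tails $C_i=\{\,j\in I: j\ge i\,\}$ generate a proper $\kappa$-complete filter on the set $I$ --- here one uses that $I$ is $\kappa$-directed --- which by strong compactness of $\kappa$ extends to a $\kappa$-complete ultrafilter $\cu$ with $C_i\in\cu$ for all $i$.

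Now set $P=\prod_\cu P_i$ (an object of $\ck$ by \cite{MP}, 5.5.1), let $p\colon N_1\to P$ be the composite of the canonical morphism $N_1\to\prod_\cu N_1$ with $\prod_\cu p_i$, and define $q\colon N_2\to P$ analogously. One checks $pf=qg$: each $X_i$ being $\kappa$-presentable, $\ck(X_i,P)$ is the $\cu$-ultraproduct of the hom-sets $\ck(X_i,P_j)$, and under this identification $pf h_i$ and $qg h_i$ are represented by $(p_jf h_i)_j$ and $(q_jg h_i)_j$, which agree on the $\cu$-large set $C_i$; hence $pf h_i=qg h_i$ for every $i$, and since the colimit cocone $(h_i)$ is jointly epimorphic, $pf=qg$. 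One also checks $U(p)(a)=U(q)(b)$: applying $U$ turns $P$ into $\prod_\cu UP_j$ and the canonical morphism into the diagonal, so $U(p)(a)$ and $U(q)(b)$ are represented by $(U(p_j)(a))_j$ and $(U(q_j)(b))_j$, which are equal in every coordinate. Thus $(P,p,q)$ witnesses $(f,a)\equiv(g,b)$, as required; note that this argument uses neither coherence nor the reduction of Remark \ref{re3.2}.

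The step needing the most care is the interface between $U$, the $\cu$-ultraproduct, and the canonical ``diagonal'' morphisms $N_i\to\prod_\cu N_i$: the codomains $N_1,N_2$ are not assumed $\kappa$-presentable, so these morphisms must be assembled from presentations of $N_1,N_2$ as $\kappa$-directed colimits of $\kappa$-presentable objects, using naturality of the diagonal on $\ck_\kappa$ and compatibility of $U$ with $\kappa$-directed colimits. All of this, however, is internal to the theory of ultraproducts in accessible categories underlying \cite{MP}, 5.5.1, and needs only that $U$ preserve directed colimits; the remaining verifications are routine.
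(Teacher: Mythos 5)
Your overall strategy---contrapositive, write $M$ as a $\kappa$-directed colimit of $\kappa$-presentable $X_i$, extract amalgams $p_i,q_i$ from the hypothesis, and glue them with a $\kappa$-complete ultrafilter extending the tail filter---is Boney's original argument, and the combinatorial frame (the reduction, the filter, the joint epimorphicity of the cocone) is fine. The genuine gap is the single step on which everything rests: the existence of $P=\prod_\cu P_i$ \emph{as an object of $\ck$}, together with the claims that $\ck(X,P)$ is the ultraproduct of the $\ck(X,P_j)$ for $X$ $\kappa$-presentable and that $U(P)$ is the ultraproduct of the $U(P_j)$. An accessible category with concrete directed colimits is not assumed to have products, so the ultraproduct $\colim_{A\in\cu}\prod_{i\in A}P_i$ cannot even be formed internally; and \cite{MP}~5.5.1, which you cite for this, says nothing of the sort---it is the theorem that the (full) image of an accessible functor is $\kappa$-accessible and closed under $\kappa$-directed colimits for $\kappa$ a suitable strongly compact cardinal. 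To make your construction work one must embed $\ck$ fully into a category of structures where products exist, axiomatize its image in $L_{\kappa,\kappa}$, invoke \L o\'s's theorem for $\kappa$-complete ultrafilters, and check that the induced maps $p,q$ land back in $\ck$ and that $U$ is compatible with the underlying-set functor of that ambient category. None of this follows from ``$U$ preserves directed colimits'': that hypothesis gives no control over products, so even granting the ultraproduct's existence in some ambient category, $U(p)(a)$ need not be computed diagonally as you assert. In the AEC case Boney gets all of this from the Presentation Theorem; in the present generality there is no such theorem available off the shelf (the paper's Proposition~\ref{prop2.4} is a functor \emph{onto} $\ck$, not a full embedding of $\ck$ into a category with products, and it requires morphisms to be monomorphisms).

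For comparison, the paper sidesteps ultraproducts at the top level entirely: it forms the accessible category $\cl_1$ of \emph{witnessed} equivalences $(f_0,f_1,a_0,a_1,h_0,h_1)$ and the accessible category $\cl_2$ of pairs of types $(f_0,f_1,a_0,a_1)$, applies \cite{MP}~5.5.1 to the forgetful functor $G:\cl_1\to\cl_2$ to conclude that its full image is closed under $\kappa$-directed colimits for some strongly compact $\kappa$, and then observes that $(f_0,f_1,a_0,a_1)$ is the $\kappa$-directed colimit of the restrictions $(f_0u,f_1u,a_0,a_1)$, each of which lies in the image by hypothesis. All the ultraproduct technology is thereby quarantined inside the proof of \cite{MP}~5.5.1, where the requisite embedding into a category of structures is already carried out. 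If you want to keep your hands-on ultraproduct argument, you would need to supply that embedding and the \L o\'s step explicitly; as written, the proof does not go through.
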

\begin{proof}
Let $\ck$ be an accessible category with concrete directed colimits and let $\cl_2$ be the category of quadruples $(f_0,f_1,a_0,a_1)$ where
$f_0:M\to N_0$, $f_1:M\to N_1$, $a_0\in UN_0$ and $a_1\in U(N_1)$. Let $\cl_1$ be the category of configurations $(f_0,f_1,a_0,a_1,h_0,h_1)$ 
from \ref{def4.1}. Then both $\cl_1,\cl_2$ are accessible and the forgetful functor $G:\cl_1\to\cl_2$ is accessible as well. 
It is easy to see that the full image of $G$ is a sieve, i.e., for a morphism $(u,v):(g_0,g_1)\to (f_0,f_1)$ with $(f_0,f_1)\in G(\cl_1)$ we have $(g_0,g_1)\in G(\cl_1)$. Following \cite{MP} 5.5.1, the full image of $G$ is $\kappa$-accessible and closed under $\kappa$-directed colimits in $\cl_2$ for some strongly compact cardinal $\kappa$. We will show that $\ck$ is $\kappa$-tame.

Consider $(f_0,f_1,a_0,a_1)$ such that the types $(f_0,u,a_0)$ and $(f_1u,a_1)$ are equivalent for any $u:X\to M$, $X$ $\kappa$-presentable. 
Thus all qua\-drup\-les $(f_0u,f_1u,a_0,a_1)$ belong to the full image of $G$ and, since $(f_0,f_1,a_0,a_1)$ is their $\kappa$-filtered colimit, it belongs
to this full image as well. Thus the types $(f_0,a_0)$ and $(f_1,a_1)$ are equivalent. Hence $\ck$ is $\kappa$-tame. 
\end{proof}

As a consequence, we get the main result of \cite{B}.

\begin{coro}\label{cor5.3} 
Assuming $(C)$, any AEC is tame.
\end{coro}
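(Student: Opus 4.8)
The plan is to derive Corollary~\ref{cor5.3} directly from Theorem~\ref{th5.2} by showing that every AEC falls within the scope of that theorem, and that the category-theoretic notion of tameness supplied by the theorem specializes to the usual (Galois) tameness for AECs. The first half is immediate: by Lemma~\ref{le3.4}, any abstract elementary class $\ck$, viewed together with its forgetful functor $U:\ck\to\Set$, is a coherent accessible category with concrete directed colimits; in particular it is an accessible category with concrete directed colimits, so Theorem~\ref{th5.2} applies and, assuming $(C)$, yields a regular cardinal $\kappa$ for which $\ck$ is $\kappa$-tame in the sense of Definition~\ref{def5.1}.

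Next I would check that $\kappa$-tameness in the sense of Definition~\ref{def5.1} coincides with $\kappa$-tameness in the AEC sense. In an AEC, tameness is usually phrased in terms of Galois types over models of size $<\kappa$: if two Galois types over a model $M$ are distinct, they already differ over some submodel of $M$ of size $<\kappa$. I would observe that, under Assumption~\ref{blanket} (amalgamation, so that the equivalence relation in Definition~\ref{def4.1} genuinely is an equivalence relation and Galois types are well-defined), the types $(f,a)$, $(g,b)$ of Definition~\ref{def4.1} with common domain $M$ are exactly the Galois types over $M$, and the restriction $(fh,a)$, $(gh,b)$ along $h:X\to M$ with $X$ $\kappa$-presentable corresponds to restriction to a submodel of $M$ of size $<\kappa$ — here one uses Statement~\ref{st2.0}, which identifies $\kappa$-presentability with having size $<\kappa$, together with the fact that in an AEC the $\kappa$-presentable objects are, up to isomorphism, the members of $\ck$ of cardinality $<\kappa$, and that every such submodel embedding arises as such an $h$. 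Thus non-equivalence of $(f,a)$ and $(g,b)$ detected by some $\kappa$-presentable $X$ is precisely the statement that the two Galois types are separated by a small submodel, i.e. $\kappa$-tameness of the AEC.

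Finally, I would conclude: since $\ck$ is $\kappa$-tame in the sense of Definition~\ref{def5.1} for some regular $\kappa$, it is tame in that sense, hence tame in the AEC sense, which is exactly Boney's theorem. I expect the only real point requiring care — the ``main obstacle,'' though it is more a matter of bookkeeping than of genuine difficulty — is the translation between the two formulations of tameness, specifically verifying that restrictions of a type along morphisms from $\kappa$-presentable objects in $(\ck,U)$ match restrictions of a Galois type to $<\kappa$-sized submodels in the AEC. One should also note in passing that the cardinal $\kappa$ produced is below the first strongly compact, matching the bound in \cite{B}, though that refinement is not needed for the statement as written.
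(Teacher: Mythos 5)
Your proposal is correct and matches the paper's (implicit) argument exactly: the paper obtains Corollary~\ref{cor5.3} as an immediate consequence of Theorem~\ref{th5.2} via Lemma~\ref{le3.4}, and your careful check that Definition~\ref{def5.1} specializes to the usual Galois-tameness for AECs (via the correspondence between $\kappa$-presentability and cardinality $<\kappa$ above the L\"owenheim-Skolem number) is exactly the bookkeeping being suppressed. The only quibble is your closing aside: the $\kappa$ produced by Theorem~\ref{th5.2} is itself a strongly compact cardinal (from \cite{MP} 5.5.1), not a cardinal below the first strongly compact, but as you note this does not affect the statement.
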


As noted in \cite{BaS} and \cite{B}, the sensitivity of Theorem \ref{th5.2} and Corollary \ref{cor5.3} to set theory is genuine: assuming $V=L$, the AEC of exact sequences constructed in Section $2$ of \cite{BaS} is not tame.

\section{Saturation}
 
\begin{defi}\label{def6.1}
{
\em 
Let $(\ck,U)$ be an accessible category with concrete directed colimits. We say that a type $(f,a)$ where $f:M\to N$ is \textit{realized} in $K$ if there
is a morphism $g:M\to K$ and $b\in U(K)$ such that $(f,a)$ and $(g,b)$ are equivalent.

%Let $\ck$ be an accessible category with concrete directed colimits. We say that a type $(f,a)$ where $f:M\to N$ is \textit{realized} in $K$ relative to $g:M\to K$ if there is a $b\in U(K)$ such that $(f,a)$ and $(g,b)$ are equivalent.

Let $\lambda$ be a regular cardinal. We say that $K$ is $\lambda$-\textit{Galois saturated} if for any $g:M\to K$ where $M$ is $\lambda$-presentable 
and any type $(f,a)$ where $f:M\to N$ there is $b\in U(K)$ such that $(f,a)$ and $(g,b)$ are equivalent.
}
\end{defi}

In AECs with amalgamation, of course, this corresponds to the usual definition.  In that case, $\lambda$-Galois saturation is equivalent to $\lambda$-model homogeneity or, in the language of \cite{R} and Section 2 above, $\lambda$-saturation.  This equivalence also holds in our more general context, by an argument thinly generalizing that of \cite{B}:

\begin{propo}\label{prop6.2} 
Suppose there is a proper class of cardinals $\lambda$ with $\lambda^{<\lambda}=\lambda$.  Let $(\ck,U)$ be a coherent large accessible category with concrete directed colimits whose morphisms are concrete monomorphisms and let $\lambda$ be a sufficiently large regular cardinal. Then $K$ is $\lambda$-Galois saturated if and only if it is $\lambda$-saturated.
\end{propo}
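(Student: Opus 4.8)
I would prove the two implications separately. The forward direction ($\lambda$-saturated $\Rightarrow$ $\lambda$-Galois saturated) is routine and needs neither coherence nor the monomorphism hypothesis nor the large cardinal assumption. Given $g:M\to K$ with $M$ $\lambda$-presentable and a type $(f,a)$ with $f:M\to N$, write $N$ as the $\lambda$-directed colimit $\langle\iota_\alpha:N_\alpha\to N\rangle$ of its $\lambda$-presentable subobjects. Since $M$ is $\lambda$-presentable, $f$ factors through some $N_\alpha$, and since $U$ preserves $\lambda$-directed colimits, $a=U(\iota_\beta)(a_\beta)$ for some $\beta$ and $a_\beta\in UN_\beta$; amalgamating the indices we get $f_\gamma:M\to N_\gamma$ with $\iota_\gamma f_\gamma=f$ and $a_\gamma\in UN_\gamma$ with $U(\iota_\gamma)(a_\gamma)=a$, whence $(f,a)$ and $(f_\gamma,a_\gamma)$ are equivalent via $\iota_\gamma$ and $\id_N$. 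As $M,N_\gamma$ are $\lambda$-presentable, $\lambda$-saturation of $K$ gives $h:N_\gamma\to K$ with $hf_\gamma=g$; then $b:=U(h)(a_\gamma)$ makes $(f_\gamma,a_\gamma)$ equivalent to $(g,b)$ via $h$ and $\id_K$, and by the amalgamation property the relation is transitive, so $(f,a)$ is realized in $K$.

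The reverse direction carries the weight. Suppose $K$ is $\lambda$-Galois saturated, and take $f:A\to K$, $g:A\to B$ with $A,B$ $\lambda$-presentable; since all morphisms are concrete monomorphisms we may assume $g$ is the inclusion of a subobject $A\le B$, and we must produce $h:B\to K$ with $hg=f$. The heart of the matter is a \emph{one-generator extension step}: if $C\le C'\le B$ are $\lambda$-presentable subobjects such that $UC'$ is the closure of $UC\cup\{a\}$ under the function symbols of the canonical signature $\Sigma_\ck$ of Remark \ref{re3.5} (for a single $a\in UC'$), and $f':C\to K$ is given, then $f'$ extends to $C'\to K$. To see this, apply $\lambda$-Galois saturation to $f':C\to K$ and the type $(C\hookrightarrow C',\,a)$, obtaining $b\in UK$, an object $D$, and morphisms $h_0:C'\to D$, $h_1:K\to D$ with $h_0(C\hookrightarrow C')=h_1f'$ and $U(h_0)(a)=U(h_1)(b)$. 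Since $h_1$ is a concrete monomorphism, its image in $UD$ is closed under the function symbols of $\Sigma_\ck$, while $h_0$, being a $\ck$-morphism, commutes with them; as $U(h_0)$ sends both $UC$ and $a$ into the image of $U(h_1)$, it sends all of $UC'$ there, so there is a unique map $\psi:UC'\to UK$ with $U(h_1)\psi=U(h_0)$. \emph{This is exactly where coherence is used}: the triangle $U(h_0)=U(h_1)\psi$ forces $\psi=U(\bar f)$ for some $\bar f:C'\to K$, and faithfulness of $U$ together with $h_1$ being monic gives $\bar f(C\hookrightarrow C')=f'$.

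The proposition then follows by iteration. Choosing $\lambda$ ``sufficiently large'' --- regular, above $\lambda_U$ and $\lambda_\ck$, and large enough that every $\lambda$-presentable object has underlying set of cardinality $<\lambda$ (legitimate since there is only a set of such objects up to isomorphism) --- enumerate $UB$ and build a continuous chain $A=C_0\le C_1\le\cdots\le C_\gamma$ of $\lambda$-presentable subobjects of $B$, with $\gamma<\lambda$, in which $C_{i+1}$ is the subobject generated over $C_i$ by the $i$-th enumerated element (its underlying set being the $\Sigma_\ck$-closure of $UC_i\cup\{a_i\}$); then $UC_\gamma=UB$, and because coherence makes $U$ conservative (applying the pull-back step above to the set-theoretic inverse of $U(C_\gamma\hookrightarrow B)$ produces a two-sided inverse in $\ck$), the inclusion $C_\gamma\hookrightarrow B$ is an isomorphism, so $C_\gamma=B$. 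Now build $f_i:C_i\to K$ with $f_0=f$, using the one-generator step at successors and the colimit property (together with closure of $\lambda$-presentable objects under colimits of diagrams of size $<\lambda$) at limits; then $h:=f_\gamma:B\to K$ satisfies $hg=f$, as required. In spirit this is the categorical rendering of Boney's argument in \cite{B}, with the monomorphism/coherence package playing the role of the syntactic presentation of an AEC.

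The step I expect to be the main obstacle is the subobject bookkeeping underlying the last paragraph: one must verify that the subobject of $B$ generated over a $\lambda$-presentable subobject by one further element exists, is again $\lambda$-presentable, and has as underlying set exactly the $\Sigma_\ck$-closure of the generating set --- i.e. that $\ck$-subobjects of a $\lambda$-presentable object are governed by the function symbols of $\Sigma_\ck$ in a way that stays within the $\lambda$-presentable objects. This is precisely the point at which the interaction of coherence with the accessible and concrete structure (the near-relational character of coherent $(\ck,U)$ hinted at in Remark \ref{re3.5}) does the work, and where an argument in the absence of coherence would collapse; if one prefers, the construction can be reorganized by passing to the canonical signature and the reduct $\ck_0$ of Remark \ref{re3.2}, where the relevant closures are manifestly substructures.
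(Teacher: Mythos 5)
Your forward direction is essentially the paper's: factor the type through a $\lambda$-presentable $N_0$ containing $a$ and apply $\lambda$-saturation. The problem is in the reverse direction, and it is exactly the step you flag as ``the main obstacle'': your argument presupposes that for a $\lambda$-presentable subobject $C\le B$ and an element $a\in UB$ there exists a $\ck$-subobject $C'\le B$ whose underlying set is \emph{precisely} the $\Sigma_\ck$-closure of $UC\cup\{a\}$, and that $B$ is exhausted by a continuous chain of such one-generator extensions. None of the hypotheses --- coherence, concreteness of monomorphisms, accessibility --- delivers this. Accessibility gives you a $\lambda$-directed family of $\lambda$-presentable subobjects whose underlying sets cover $UB$, but a \emph{prescribed} subset of $UB$ (such as a term closure) need not be the underlying set of any subobject; this already fails in AECs, where the L\"{o}wenheim--Skolem axiom produces \emph{some} small strong submodel containing a given set, not the closure under function symbols, and the term-generated substructure need not lie in the class at all. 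Passing to $\ck_0$ or to $\Str(\Sigma_\ck)$ does not help, since the image of $\ck$ there is not closed under substructures. Without the exact-closure property your one-generator step collapses: realizing the single Galois type of $a$ only places $U(h_0)(UC\cup\{a\})$ and its $\Sigma_\ck$-closure inside the image of $U(h_1)$, so if $UC'$ is any larger you cannot define $\psi$ on all of $UC'$.

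The paper's proof avoids this entirely by turning the construction around. Instead of decomposing $N$ (your $B$) into generated subobjects, it fixes a monster object $L$ receiving both $N$ and $K$, enumerates $U(N)\setminus U(h)(UM)$ as $\langle a_i\mid i<\alpha\rangle$, and builds a smooth chain of $\lambda$-presentable objects $M_i$ \emph{extending $M$ and mapping to $K$} (obtained by factoring $f_i:M_i\to K$ through a larger $\lambda$-presentable approximation of $K$ containing the realization $b$ of the relevant type), together with \emph{partial set-maps} $t_i$ defined on $U(h)(UM)\cup\{a_k\mid k<i\}$ --- mere subsets of $UN$, with no requirement that they underlie subobjects. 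Compatibility of the successive realizations is enforced by the maps $u_i:M_i\to L$ and the homogeneity of $L$ (Lemma \ref{le4.2}), and coherence is invoked exactly once, at the end, when $t_\alpha$ is total on $UN$ and satisfies $U(u_\alpha)t_\alpha=U(g_1)$, to conclude $t_\alpha=U(\overline{t}_\alpha)$. If you want to salvage your outline, you should restructure it along these lines; as written, the chain $C_0\le C_1\le\cdots\le C_\gamma=B$ need not exist, so the induction has nothing to run on.
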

\begin{proof}
Let $K$ be $\lambda$-saturated and consider $g:M\to K$ with $M$ $\lambda$-pre\-sen\-table. Consider $(f,a)$ where $f:M\to N$. There is
a $\lambda$-pre\-sen\-table object $N_0$ and a factorization of $f$ over $f_0:M\to N_0$ such that $a\in U(N_0)$. The types $(f,a)$ and
$(f_0,a)$ are equivalent. Since $K$ is $\lambda$-saturated, there is a morphism $g_0:N_0\to K$ such that $g_0f_0=g$. Then the types
$(f_0,a)$ and $(g,U(g_0)(a))$ are equivalent. Thus $K$ is $\lambda$-Galois saturated.

Conversely, let $K$ be $\lambda$-Galois saturated, $h:M\to N$ be a morphism between $\lambda$-presentable objects and $f:M\to K$ a morphism. 
Following \ref{st2.1}, there is a cardinal $\mu$ such that $M$, $N$ and $K$ are $\mu^+$-presentable and equipped with morphisms $g_1:N\to L$
and $g_2:K\to L$ to a $\mu$-saturated $\mu^+$-presentable object $L$. We proceed, roughly speaking, as in the proof of Theorem 8.14 in \cite{Ba}: 
enumerate $U(N)\setminus U(h)(U(M))=\langle a_i\,|\,i<\alpha\rangle$, where $\alpha<\lambda$. We construct, inductively:\\
1. a smooth chain $(m_{ij}: M_i\to M_j)_{i\leq j\leq \alpha}$ of $\lambda$-presentable objects $M_i$ with $M_0=M$ and morphisms $f_i:M_i\to K$,
$u_i:M_i\to L$ for $i\leq\alpha$ such that $f_0=f$, $f_jm_{ij}=f_i$, $u_0=g_1h$ and $u_jm_{ij}=u_i$ for $i\leq j\leq\alpha$, and\\
2. mappings $t_i:U(h)(U(M))\cup \{a_k|k<i\}\to U(M_i)$ for $i\leq\alpha$ such that $t_0=h^{-1}$ restricted to $U(h)(UM)$, $t_iU(h)=U(m_{0i})$, $t_jU(m_{ij})=t_i$ 
and $U(u_i)t_i=U(g_1)$ for $i\leq j\leq\alpha$ (in the last equation, $U(g_1)$ is restricted to the domain of $t_i$).

Suppose we have constructed $M_i$, $f_i$ and $t_i$. Consider the type 
$$(u_i,U(g_1)(a_i)).$$
Since $K$ is $\lambda$-Galois saturated, there is $b\in U(K)$ such that this ty\-pe is equivalent to $(f_i, b)$. 
Following \ref{le4.2}, there is an isomorphism $s:L\to L$ such that $sg_2f_i=u_i$ and $U(sg_2)(b)=U(g_1)(a_i)$. 
There is a $\lambda$-presentable object $M_{i+1}$, an element $c\in U(M_{i+1})$ and morphisms $m_{i,i+1}:M_i\to M_{i+1}$, $f_{i+1}:M_{i+1}\to K$   
such that $U(f_{i+1})(c)=b$ and $f_{i+1}m_{i,i+1}=f_i$. Put $u_{i+1}=sg_2f_{i+1}$. Then 
$$
u_{i+1}m_{i,i+1}=sg_2f_{i+1}m_{i,i+1}=sg_2f_i=u_i.
$$
Let $t_{i+1}U(m_{i,i+1})=t_i$ and $t(a_i)=c$. Then $t_iU(h)=U(m_{0i})$ and, since
$$
U(u_{i+1})(c)=U(sg_2f_{i+1})(c)=U(sg_2)(b)=u(g_1)(a_i),
$$
we have $U(u_{i+1})t_{i+1}=U(g_1)$. In limit steps we take colimits. 
 
Since $U(u_\alpha)t_\alpha=U(g_1)$ and $(\mathcal K,U)$ is coherent, $t_\alpha=U(\overline{t}_\alpha)$ for
$\overline{t}_\alpha:N\to M_\alpha$. Since $f_\alpha\overline{t}_\alpha h=f_\alpha m_{0\alpha}=f$, $K$ is $\lambda$-saturated.

\end{proof}

We note that coherence of $(\ck, U)$ appears to be indispensable in the ``only if'' portion of this proof.  As in the proof of the equivalence of Galois saturation and model homogeneity in \cite{Ba}, or in the more straight-forwardly category-theoretic proof of that fact in \cite{G}, coherence is the only guarantee that the newly-constructed map of underlying sets, $t_\alpha$, is a $\ck$-morphism.  This is true more broadly: when attempting to build a $\ck$-morphism element by ele\-ment, it seems 
that one must, as a rule, assume coherence to guarantee success.

This is significant, given the essential role played by the analogue of \ref{prop6.2} in AECs. The equivalence of Galois-saturated 
and model-ho\-mo\-ge\-neous models leads to uniqueness of Galois-saturated models in each car\-di\-na\-li\-ty, a result which features heavily in the existing categoricity transfer results for AECs.

\section{Stability}

We have now observed that in any accessible category $\ck$ with concrete directed colimits, it is possible to make sense of Galois types, hence we may also speak meaningfully of Galois stability, which, again, we define in the obvious way:  

\begin{defi}{\em Let $(\ck,U)$ be an accessible category with concrete directed colimits.  We say that an object $M$ in $\ck$ is \textit{$\mu$-Galois stable} if for any $\mu$-presentable object $M_0$ and morphism $M_0\to M$, there are fewer than $\mu$ types over $M_0$ realized in $M$.  We say that $\ck$ itself is \textit{$\mu$-Galois stable} if every $M$ in $\ck$ is $\mu$-Galois stable.}\end{defi}

\begin{rem}\label{re7.2}
{
\em
We can reformulate this definition by saying that for any object $M_0$ of size $\mu$ and a morphism $M_0\to M$, there are $\leq\mu$ types over $M_0$
realized in $M$. If morphisms of $\ck$ are concrete monomorphisms and $a,b\in UM_0$ are distinct elements then the types $(\id_{M_0},a)$ and $(\id_{M_0},b)$
are not equivalent. Since they are realized in $M$, there are $\mu$ types over $M_0$ realized in $M$. Thus our definition coincides with that for abstract
elementary classes. 
%Notice that we insist on {\em fewer than} $\mu$ types over $\mu$-presentable objects, which may appear odd to those accustomed to the definition from AECs---recall, %though, that an object of cardinality $\mu$ in an AEC is in fact $\mu^+$-presentable, meaning that this definition is equivalent to the more familiar one in case $
%\ck$ is an AEC.  We note that weak $\mu$-stability of the sort introduced in \cite{R} and discussed in Section 3 appears not to be a consequence of $\mu$-Galois
%stability in the above sense; neither is it a consequence of $\mu$-Galois stability in the AEC case (see \cite{L}).
}
\end{rem}
 
Having generalized to a large accessible category with concrete directed colimits $(\ck,U)$, it is rather surprising that one can develop a nontrivial stability theory.  This is thanks in large part to the existence even in this context of an Ehrenfeucht-Mostowski functor, $E:\Lin\to\ck$, which is faithful, preserves directed colimits and, moreover, preserves sizes for sufficiently large $\lambda$ (see Remark \ref{re2.7}).  We denote by $\lambda_E$ the cardinal at which $E$ 
begins preserving sizes.   

Following the argument in \cite{Ba1}, which first isolated specific applications of EM-models to stability-theoretic issues in AECs, we are able to prove the following test result.  Following \cite{R}, we define:

\begin{defi}\label{def7.3}
{\em
Let $\lambda$ be an infinite cardinal. We say that an accessible category $\ck$ with directed colimits is $\lambda$-\textit{categorical} if it has, up to isomorphism, precisely one object size $\lambda$.
}
\end{defi}

\begin{theo}\label{theo7.4} Suppose there is a proper class of cardinals $\lambda$ with $\lambda^{<\lambda}=\lambda$.  Let $(\ck,U)$ be a large accessible category with concrete directed colimits such that $U$ reflects split epimorphisms. If $\ck$ is $\lambda$-categorical, then $\ck$ is $\mu$-Galois stable for all 
$\lambda_E+\lambda_U\leq\mu\leq\lambda$.\end{theo}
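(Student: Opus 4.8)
The plan is to run Baldwin's Ehrenfeucht--Mostowski argument from \cite{Ba1} in this setting: $\lambda$-categoricity forces the unique object of size $\lambda$ to be the EM-object $E(\lambda)$ over the ordinal $\lambda$, and an EM-object over a well-ordered index set realises only few Galois types over small submodels, because a well-order has only few cuts. Throughout we use that $\ck$ is LS-accessible and that $E\colon\Lin\to\ck$ is faithful, preserves directed colimits, and preserves sizes from $\lambda_E$ on, so $E(\lambda)$ has size $\lambda$ and is, by $\lambda$-categoricity, the unique object of that size.

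First I would reduce, as in Remark \ref{re7.2}, to showing that for every object $M_0$ of size $\mu$ there are at most $\mu$ Galois types over $M_0$ altogether (a type realised via a given $g_0\colon M_0\to M$ is one of these, so this suffices). Suppose not, so there are $\ge\mu^+$ of them. By the reduction used in the proof of Proposition \ref{prop6.2}, each such type has a representative $(f^\xi_0\colon M_0\to N^\xi_0, a_\xi)$ with $N^\xi_0$ of size $\le\mu$. Wide-amalgamating the $N^\xi_0$ over $M_0$ (Assumption \ref{blanket}) yields a single object $M$ of size $\le\mu^+\le\lambda$ with a morphism $g_0\colon M_0\to M$ in which all $\mu^+$ types are realised, and still pairwise distinct --- here one uses that pushing a type forward along \emph{any} morphism cannot collapse it, since equivalence of $(g_0,a)$ and $(g_0,b)$ involves only morphisms out of the domain, obtainable by precomposition. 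Next, since $\ck$ is LS-accessible and has the joint embedding property, any object of size $\le\lambda$ admits a morphism to $E(\lambda)$: amalgamate it with $E(\lambda)$, cut the result down (downward L\"owenheim--Skolem) to a quasi-submodel of size $\lambda$ through which the map factors, and invoke $\lambda$-categoricity. Applying this to $M$ gives $v\colon M\to E(\lambda)$, so $E(\lambda)$ carries $\ge\mu^+$ pairwise non-equivalent Galois types over the size-$\mu$ object $M_0$, via the single map $vg_0$; it remains to contradict this.

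The heart of the matter is counting Galois types over small submodels realised in $E:=E(\lambda)$. Since $E$ preserves directed colimits and $\lambda$, as a linear order, is the directed colimit of its finite suborders, every element of $UE(\lambda)$ is of the form $UE(\iota_F)(c')$ where $\iota_F\colon F\hookrightarrow\lambda$ is the inclusion of a finite $F$ and $c'\in UE(F)$; put $\theta_0=\sup_F|UE(F)|$, which is $\le\lambda_E$ by the construction of the EM-functor in \cite{MP} and \cite{BR}. Because $E(\lambda)$ is the $\mu^+$-directed colimit of the $E(Y')$ with $|Y'|\le\mu$ and $M_0$ is $\mu^+$-presentable, $vg_0$ factors through $E(Y\hookrightarrow\lambda)$ for some $Y\subseteq\lambda$ with $|Y|\le\mu$, so it suffices to bound the number of Galois types over $E(Y)$ realised in $E(\lambda)$. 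The key claim is: if $c=UE(\iota_F)(c')$ and $d=UE(\iota_{F'})(d')$ and there is an order-isomorphism $\varphi\colon F\to F'$ with $UE(\varphi)(c')=d'$ such that each $x\in F$ occupies the same position relative to $Y$ (same element of, or same cut of, $Y$) as $\varphi(x)$, then $c$ and $d$ realise the same Galois type over $E(Y)$. Indeed, the inclusion $\iota\colon F\cup Y\to\lambda$ and the order-embedding $g\colon F\cup Y\to\lambda$ that is the identity on $Y$ and $\varphi$ on $F$ (well-defined by the position hypothesis) agree on $Y$, so $E(\iota)$ and $E(g)$ agree on $E(Y)$; amalgamating $E(\iota),E(g)\colon E(F\cup Y)\rightrightarrows E(\lambda)$ --- no pushout is needed, amalgamation suffices --- gives $h_0,h_1\colon E(\lambda)\to Z$ with $h_0E(\iota)=h_1E(g)$, whence a short chase shows $(E(Y)\hookrightarrow E(\lambda),c)\sim(E(Y)\hookrightarrow E(\lambda),d)$. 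Hence the number of such types is at most $\theta_0$ times the number of position-configurations of finite tuples over $Y$, i.e.\ finite sequences over $Y$ together with the cuts of $Y$. Since $Y\subseteq\lambda$ is well-ordered it has at most $|Y|\le\mu$ cuts, so there are at most $\mu$ configurations and hence at most $\mu\cdot\theta_0\le\mu\cdot\lambda_E=\mu$ types over $E(Y)$ realised in $E(\lambda)$ --- contradicting $\ge\mu^+$, as desired.

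I expect the real obstacles to be technical rather than conceptual. The first is carrying out the L\"owenheim--Skolem reductions cleanly in a category whose morphisms need not be monomorphisms: ``submodel'' must be read as ``a morphism from an object of controlled size hitting the prescribed elements'', and one must check that the relevant factorizations through small (co)limits actually occur --- this is routine given LS-accessibility, and harmless because Galois equivalence never refers to morphisms \emph{into} a domain. The second is extracting the bound $\theta_0\le\lambda_E$ from the cited constructions of $E$. Finally, the endpoint $\mu=\lambda$ needs a separate and more delicate argument: a putative counterexample then has size $\lambda^+$ and cannot be transported into $E(\lambda)$, so instead one shows that $\lambda$-categoricity makes the object of size $\lambda$ itself $\lambda$-Galois saturated --- equivalently, by Proposition \ref{prop6.2}, $\lambda$-saturated --- using uniqueness of saturated objects in the sizes below $\lambda$, after which the same cut-counting (now with $|Y|<\lambda$) applies.
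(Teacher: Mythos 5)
Your argument is correct in its main line but reaches the key counting step by a genuinely different route. The paper also funnels everything through the EM-functor, but over the brimful linear order $\lambda^{<\omega}$ rather than the well-order $\lambda$: Proposition \ref{prop7.7} shows that $E$ transports brimfulness from $\Lin$ to $\ck$, and Lemma \ref{le7.8} then bounds the types over $M_0$ realized in $M=E(\lambda^{<\omega})$ by observing that each such type is already realized in a single $\mu$-presentable $\bar{M}$ that is $\mu$-universal over $M_0$ in $M$, so the count is at most $|U\bar{M}|<\mu$ (using $\mu\geq\lambda_U$). You instead work in $E(\lambda)$ and count types over $E(Y)$, $Y\subseteq\lambda$ small, by the classical indiscernibility argument: finite tuples in the same position-configuration over $Y$ yield Galois-equivalent elements, and a well-order has only about $|Y|$ cuts. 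Your key claim is sound --- it needs only amalgamation and functoriality of $E$ and $U$, exactly as you say --- and your transfer from $E(\lambda)$ to an arbitrary object via joint embedding and $\lambda^+$-accessibility is the same move the paper makes at the end of its proof. The paper's route buys cleaner cardinal bookkeeping ($|U\bar{M}|<\mu$ is immediate from $\mu$-presentability and $\mu\geq\lambda_U$, with no auxiliary constants); yours buys independence from the brimful machinery and from Claim 4.4 of \cite{Ba1}.

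Two soft spots, both repairable. First, your constant $\theta_0=\sup_F|UE(F)|$ is not obviously $\leq\lambda_E$: the theorem does not assume morphisms are concrete monomorphisms, so the maps $UE(F)\to UE(\lambda)$ need not be injective and $|UE(F)|$ cannot be read off from $|UE(\lambda)|$, while the paper's $\lambda_E$ says nothing about $E$ on finite orders. The fix is to replace $F$ by $Y\cup F$ throughout, so that the relevant objects $E(Y\cup F)$ have size at most $|Y|\leq\mu$ (using $\mu\geq\lambda_E+\lambda_U$) and the count becomes $\mu\cdot\mu=\mu$. Second, for the endpoint $\mu=\lambda$ your proposed detour through saturation risks circularity --- the paper derives saturation of the size-$\lambda$ object \emph{from} this theorem --- but the detour is unnecessary: if you count against the definition directly ($<\lambda$ types over a $\lambda$-presentable $M_0$, rather than $\leq\mu$ types over an object of size $\mu$), the amalgam of $\lambda$ many realizations has size $\leq\lambda$, maps into $E(\lambda)$, and the same cut-count over some $Y$ with $|Y|<\lambda$ yields fewer than $\lambda$ types.
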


We proceed by way of two definitions and a handful of minor lemmas.

\begin{defi}\label{def7.5}{\em Given a composition $M_0\to\bar{M}\to M$, we say that $\bar{M}$ is \textit{$\mu$-universal over $M_0$ in $M$} if for any other factorization $M_0\to N\to M$ with $N$ $\mu$-presentable, $N$ maps in $\bar{M}$ over $M_0$, i.e. there is a morphism $N\to \bar{M}$ so that the left half of the following diagram commutes:
$$
\xymatrix@=2pc{
M_0\ar[r]\ar[dr] & \bar{M}\ar[r] & M \\
 & N\ar@{-->}[u]\ar[ur] & 
}
$$}
\end{defi}

\begin{defi}\label{def7.6}{\em We say that an object $M$ of size $\lambda$ is \textit{brimful} if for any $\mu\leq\lambda$ and $\mu$-presentable $M_0$,
any morphism $M_0\to M$ factors as $M_0\to\bar{M}\to M$ where $\bar{M}$ is $\mu$-universal over $M_0$ in $M$.}\end{defi}

We note that this is the notion from \cite{Ba}---only subtly different from the brimmed models in \cite{S1}.

\begin{propo}\label{prop7.7}If a linear order $I$ is brimful, so is $E(I)$.\end{propo}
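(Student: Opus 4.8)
The plan is to transport brimfulness along the EM-functor $E:\Lin\to\ck$ by reading off, from a witnessing factorization in $\ck$, a corresponding factorization of linear orders. Suppose $I$ is a brimful linear order; by Corollary \ref{cor2.6} (or Remark \ref{re2.7}) and the fact that $E$ preserves sizes above $\lambda_E$, we may assume $|I|=\lambda$ with $\lambda$ large enough that $|E(I)|=\lambda$ as well. Fix $\mu\le\lambda$, a $\mu$-presentable object $M_0$ in $\ck$, and a morphism $g:M_0\to E(I)$; we must factor $g$ through some $\overline{M}$ that is $\mu$-universal over $M_0$ in $E(I)$. Since $E$ preserves directed colimits and $I=\colim_{J}J$ over its $\mu$-small (finite, in fact, but at worst $\mu$-presentable) suborders $J$, $\mu$-presentability of $M_0$ gives a factorization $g=E(\iota_J)\,g_J$ with $g_J:M_0\to E(J)$ and $\iota_J:J\hookrightarrow I$, for some suborder $J$ of size $<\mu$.

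Now I would use brimfulness of $I$ applied to the suborder $J\hookrightarrow I$ (which is a $\mu$-presentable---indeed $|J|$-presentable---object of $\Lin$): there is a factorization $J\to\bar J\to I$ with $\bar J$ $\mu$-universal over $J$ in $I$. Set $\overline{M}:=E(\bar J)$, with the evident composite $M_0\xrightarrow{g_J}E(J)\to E(\bar J)\to E(I)$ equal to $g$. The heart of the argument is to check that $\overline M=E(\bar J)$ is $\mu$-universal over $M_0$ in $E(I)$: given any factorization $M_0\to N\to E(I)$ with $N$ $\mu$-presentable, we must produce $N\to E(\bar J)$ over $M_0$ making the triangle commute. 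Again by $\mu$-presentability of $N$ and $E$ preserving directed colimits, the map $N\to E(I)$ factors through some $E(K)$ for a suborder $K$ of size $<\mu$ containing $J$ (enlarging $K$ so that the composite $M_0\to N\to E(K)$ agrees with $M_0\xrightarrow{g_J}E(J)\to E(K)$; here one uses the essential uniqueness clause in $\mu$-presentability of $M_0$, possibly after enlarging $K$ further). By $\mu$-universality of $\bar J$ over $J$ in $I$, the inclusion $K\hookrightarrow I$ factors as $K\to\bar J\to I$ over $J$; applying $E$ and precomposing with $N\to E(K)$ yields the desired $N\to E(\bar J)$, and commutativity of the relevant triangles is inherited from the corresponding diagrams in $\Lin$.

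The main obstacle I anticipate is bookkeeping with the "essentially unique" factorizations: when we factor $g:M_0\to E(I)$ and later $M_0\to N\to E(I)$ through $E$ of finite (or $<\mu$) suborders, the two factorizations of $M_0$ need not literally land in the same suborder, so one must repeatedly enlarge the suborders $J,K$ within the $\mu$-directed poset of $<\mu$-suborders of $I$ until all the relevant triangles commute on the nose rather than merely up to the ambiguity allowed by presentability. This is routine but needs care, and it is also where the size bound $\mu\le\lambda$ and the $\mu$-directedness of the indexing poset of small suborders are used. Faithfulness of $E$ is not strictly needed for this statement, only preservation of directed colimits (to get the factorizations) and of sizes (to ensure $|E(I)|=\lambda$, so that the relevant notion of brimful object---which refers to $M$ "of size $\lambda$"---applies to $E(I)$).
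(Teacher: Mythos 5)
Your proposal is correct and follows essentially the same route as the paper's proof: express $I$ as a $\mu$-directed colimit of $\mu$-presentable suborders, factor $M_0\to E(I)$ through some $E(I_\alpha)$, apply brimfulness of $I$ to get $\bar I$ universal over $I_\alpha$, set $\bar M=E(\bar I)$, and verify universality by factoring any competing $N\to E(I)$ through a larger $E(I_\beta)$ and mapping $I_\beta$ into $\bar I$ over $I_\alpha$ in $\Lin$. Your explicit attention to the ``essential uniqueness'' bookkeeping (enlarging the index until the two factorizations of $M_0$ agree on the nose) is a point the paper's diagram chase leaves implicit, but it is the same argument.
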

\begin{proof} Let $M=E(I)$, with $I$ brimful.  Say $M_0$ is $\mu$-presentable, with $\lambda_E\leq\mu<\lambda$ and a morphism $a:M_0\to M$.  Since $\Lin$ is finitely accessible, it is accessible in every regular cardinal, including $\mu$.  This means that $I$ can be expressed as a $\mu$-directed colimit of $\mu$-presentable objects, $I=\colim_{\alpha\in D} I_\alpha$.  Since $\mu\geq\lambda_E$, $E$ preserves both directed colimits and presentability ranks, meaning that $M=\colim_{\alpha\in D}M_\alpha$, with $M_\alpha=E(I_\alpha)$ $\mu$-presentable for all $\alpha\in D$.

The map $M_0\to M$ factors through some $M_\alpha=E(I_\alpha)$, say as $M_0\stackrel{a_1}{\to}M_\alpha\stackrel{a_2}{\to}M$.  As $I$ is brimful, there is a $\mu$-presentable extension $\bar{I}$ of $I_\alpha$ contained in $I$ that is $\mu$-universal over $I_\alpha$ in $I$ among linear orders.  This gives an induced factorization:
$$
\xymatrix@=2pc{
M_0\ar[rrr]^{a}\ar[dr]_{a_1} & & & M \\
 & E(I_\alpha)\ar[r]_j & E(\bar{I})\ar[ur]_k &  
}
$$
Set $\bar{M}=E(\bar{I})$.  It suffices to show that $\bar{M}$ is $\mu$-universal over $M_\alpha$ in $M$.  Let $M_0\stackrel{b_1}{\to}N\stackrel{b_2}{\to}M$ be a factorization of $M_0\stackrel{a}{\to}M$ with $N$ $\mu$-presentable.  Let $\beta\in D$ be such that $\beta>\alpha$ and the map $N\stackrel{b_2}{\to}M$ factors through $M_\beta=E(I_\beta)$, say as $N\stackrel{g}{\to} M_\beta\stackrel{h}{\to}M$.  As $I$ is brimful, there is an embedding $I_\beta$ of $\bar{I}$ over $I_\alpha$, i.e. so that the following diagram commutes, as, of course, does the induced triangle in $\ck$:
$$
\xymatrix@=2pc{
I_\alpha\ar[rr]\ar[dr] & & \bar{I} & & E(I_\alpha)\ar[rr]^j\ar[dr]_f & & E(\bar{I})\\
 & I_\beta\ar[ur] & & & & E(I_\beta)\ar[ur]_i &  
}
$$
The result is the following diagram, all triangles of which commute:
$$
\xymatrix@=2pc{
M_0\ar[rrrrr]^a\ar[drr]_{a_1}\ar[dddr]_{b_1} & & & & & M \\
 & & E(I_\alpha)\ar[dr]_{f}\ar[rr]_{j}\ar[rrru]_{a_2} & & E(\bar{I})=\bar{M}\ar[ur]_{k} & \\
 & & & E(I_\beta)\ar[ur]_{i}\ar@/_2pc/[uurr]_{h} & & \\
 & N\ar[urr]_{g}\ar@/_5pc/[rrrruuu]_{b_2} & & & &  
}
$$
It is an exercise in diagram chasing to show that, given $b_2\circ b_1=a$, $i\circ g$ is the desired embedding of $N$ into $\bar{M}$ over $M_0$.
\end{proof}

\begin{lemma}\label{le7.8}If $\ck$ is $\lambda$-categorical, the unique object $M$ of size $\lambda$ is $\mu$-Galois stable for all $\lambda_E+\lambda_U\leq\mu<\lambda^+$.\end{lemma}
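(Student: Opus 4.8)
The strategy is to realise $M$ as an Ehrenfeucht--Mostowski model over a brimful linear order and then exploit brimfulness to compress every Galois type over a small object into a single small object. Fix $\mu$ with $\lambda_E+\lambda_U\le\mu<\lambda^+$, so that $\lambda_E\le\mu\le\lambda$; it suffices to show $M$ is $\mu$-Galois stable.

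First I would invoke the combinatorial fact that for every infinite cardinal there is a brimful linear order of that cardinality, which moreover can be chosen so that the $\mu$-universal witnesses $\bar I$ furnished by brimfulness are $\mu$-presentable (cf.\ \cite{Ba}); fix such an $I$ of cardinality $\lambda$. Since $\mu\ge\lambda_E$, the EM-functor $E\colon\Lin\to\ck$ of Remark \ref{re2.7} preserves directed colimits, $\mu$-presentable objects, and sizes $\ge\lambda_E$; hence $E(I)$ has size $\lambda$, and by Proposition \ref{prop7.7} --- whose proof in fact produces witnesses of the form $E(\bar I)$ with $\bar I$ $\mu$-presentable --- the object $E(I)$ is brimful with $\mu$-presentable witnesses. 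As $\ck$ is $\lambda$-categorical, $M\cong E(I)$, so $M$ itself has this property: every morphism $g_0\colon M_0\to M$ out of a $\mu$-presentable object $M_0$ factors through maps $j\colon M_0\to\bar M$ and $e\colon\bar M\to M$ with $ej=g_0$, where $\bar M$ is $\mu$-presentable and $\mu$-universal over $M_0$ in $M$.

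Next I would count Galois types. Fix a $\mu$-presentable object $M_0$, a morphism $g_0\colon M_0\to M$, and a factorization $g_0=ej$ through a $\mu$-presentable, $\mu$-universal $\bar M$ as above. I claim that every Galois type over $M_0$ realised in $M$ (via $g_0$) equals $[(j,d)]$ for some $d\in U\bar M$. Given $b\in UM$: writing $M$ as a $\mu$-directed colimit of $\mu$-presentable objects and using that $M_0$ is $\mu$-presentable while $U$ preserves directed colimits, we obtain a $\mu$-presentable $N$, a factorization of $g_0$ through $b_1\colon M_0\to N$ and $b_2\colon N\to M$, and $b'\in UN$ with $U(b_2)(b')=b$. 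By $\mu$-universality of $\bar M$ over $M_0$ in $M$ there is $r\colon N\to\bar M$ with $rb_1=j$; set $d=U(r)(b')$. Taking $(h_0,h_1)=(\id_M,b_2)$ in Definition \ref{def4.1} exhibits $(g_0,b)\equiv(b_1,b')$, and taking $(h_0,h_1)=(r,\id_{\bar M})$ exhibits $(b_1,b')\equiv(j,d)$; since $\ck$ has amalgamation, the equivalence of types is transitive, so $[(g_0,b)]=[(j,d)]$. Hence the Galois types over $M_0$ realised in $M$ inject into $\{[(j,d)]\mid d\in U\bar M\}$, so there are at most $|U\bar M|$ of them; and since $\mu\ge\lambda_U$ and $\bar M$ is $\mu$-presentable, $|U\bar M|\le\mu$. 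As $M_0$ was arbitrary, $M$ is $\mu$-Galois stable.

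The main obstacle, as I see it, is concentrated in the first step: producing a brimful linear order of cardinality $\lambda$ whose brimfulness witnesses are $\mu$-presentable, and transferring brimfulness across $E$ via Proposition \ref{prop7.7}. The type-counting in the last step is purely formal; the only delicate point there is the distinction between the presentability rank of $\bar M$ and the cardinality of $U\bar M$ --- bounding the latter by $\mu$ is precisely what pins the lower end of the range at $\lambda_E+\lambda_U$, the summand $\lambda_U$ making $U$ size-preserving on $\bar M$ and the summand $\lambda_E$ making $E$ send $\mu$-presentable linear orders to $\mu$-presentable objects of $\ck$.
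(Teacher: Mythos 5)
Your argument is essentially the paper's proof: take a brimful linear order of cardinality $\lambda$ (the paper uses the concrete order $I=\lambda^{<\omega}$, brimful by Claim 4.4 of \cite{Ba1}), transfer brimfulness to $E(I)\cong M$ via Proposition \ref{prop7.7} with $\mu$-presentable witnesses, and observe that every type over $M_0$ realized in $M$ is already realized in the $\mu$-universal witness $\bar M$, so the count is bounded by $|U\bar M|$. The only slip is in the final bound: since $\bar M$ is $\mu$-presentable its size is $<\mu$, and $\mu\geq\lambda_U$ then gives $|U\bar M|<\mu$ rather than merely $\leq\mu$, which is what Definition 7.1 (``fewer than $\mu$ types'') actually requires.
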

\begin{proof}Take $I=\lambda^{<\omega}$, which is brimful as a linear order by Claim 4.4 in \cite{Ba1}.  Consequently, the object $E(I)$ is brimful in $\ck$.  Given that it has size $\lambda$ (as $E$ preserves sizes for $\lambda\geq\lambda_E$), we may take $M=E(I)$.  Take $M_0$ $\mu$-presentable, and $f:M_0\to M$ a morphism.  Since $M$ is brimful, we may factor $f$ as $M_0\stackrel{f_1}{\to}\bar{M}\stackrel{f_2}{\to}M$, with $\bar{M}$ $\mu$-presentable and $\mu$-universal over $M_0$.  

Any type realized over $M_0$ in $M$ will be of the form $(f,a)$, with $a\in U(M)$.  In fact, we may factor $f$ as $M_0\stackrel{g_1}{\to}M_1\stackrel{g_2}{\to}M$ with $M_1$ $\mu$-presentable and $b\in U(M_1)$ such that $U(g_2)(b)=a$.  Hence there is a morphism $h:M_1\to\bar{M}$ such that $hg_1=f_1$.  This morphism, and the pair $(f_1,Uh(b))$, witness that our type is realized in $\bar{M}$.  Since $\mu>\lambda_U$,  $U(\bar{M})$ contains fewer than $\mu$ elements.  So, in fact, there are fewer than $\mu$ types realized in $\bar{M}$ over the subobject $M_0\stackrel{f_1}{\to}\bar{M}$.  We have shown that every type over $M_0\stackrel{f}{\to} M$ in $M$ is equivalent to one of this form: the result follows.\end{proof}

We are now in a position to prove Theorem \ref{theo7.4}:
\begin{proof} Suppose that $\ck$ is not $\mu$-stable for some $\lambda_E+\lambda_U\leq\mu<\lambda^+$.  Then there is a $\mu$-presentable $M_0$ and $f:M_0\to N$ so that there are at least $\mu$ types realized in $N$ via $f$, say $\{(f,a_\alpha)\,|\,\alpha<\mu\}$.  Since $\lambda_\ck\leq\lambda_U\leq\mu$, $\ck$ is 
$\mu^+$-accessible. Consequently, $f$ factors through a $\mu^+$-presentable object $M_1$ with the additional property that $a_\alpha\in U(M_1)$ for all $\alpha<\mu$.  
In this way, we have produced a $\mu^+$-presentable model $M_1$ that is not $\mu$-stable. By the joint embedding property, there is an object $K$ in $\ck$ 
and morphisms $u:M_1\to K$ and $v:M\to K$. Since $\ck$ is $\mu^+$-accessible, $K$ is $\mu^+$-directed colimit of $\mu^+$-presentable objects, i.e. objects of size $\mu$.  All 
these objects are isomorphic to $M$ and, because $M_1$ is $\mu^+$-presentable, $u$ factors through one of those objects. Thus $M$ is not $\mu$-stable, which
contradicts Lemma \ref{le7.8}.
\end{proof}

\begin{coro}\label{cor7.9} 
Suppose there is a proper class of cardinals $\lambda$ with $\lambda^{<\lambda}=\lambda$.  Let $\ck$ be a coherent large accessible category with concrete directed colimits whose morphisms are concrete monomorphisms. Let $\ck$ be $\lambda$-categorical for a regular cardinal $\lambda\geq\lambda_U+\lambda_E$.  Then 
the unique $M$ of size $\lambda$ is saturated.\end{coro}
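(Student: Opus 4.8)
The plan is to build a $\lambda$-Galois saturated object of size $\lambda$ inside a monster object, identify it with $M$ using categoricity, and then pass from Galois saturation to saturation via Proposition~\ref{prop6.2}.

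First I would note the stability that categoricity provides. Since $\ck$ is coherent, $U$ reflects split epimorphisms (coherence is strictly stronger---see the discussion following Definition~\ref{def3.1}), so Theorem~\ref{theo7.4} applies to the $\lambda$-categorical category $(\ck,U)$ and yields that $\ck$ is $\mu$-Galois stable for every $\mu$ with $\lambda_E+\lambda_U\leq\mu\leq\lambda$. From this I extract a type-counting bound. Fixing, via the cardinal hypothesis together with Statement~\ref{st2.1} and Remark~\ref{re3.3}, a $\lambda'$-saturated object $L$ of size $\lambda'$ for some regular $\lambda'>\lambda$, I claim that for any object $N$ of size $\mu$ with $\lambda_E+\lambda_U\leq\mu<\lambda$ and any morphism $N\to L$ there are at most $\mu$ Galois types over $N$. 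Indeed, up to equivalence every type over $N$ has a $\mu^+$-presentable (hence $\lambda'$-presentable) target---shrink the witness, as in the proof of Proposition~\ref{prop6.2}---and is therefore realized in $L$ along $N\to L$ by $\lambda'$-saturation; but $L$, being an object of $\ck$, is $\mu^+$-Galois stable, so fewer than $\mu^+$ types over $N$ are realized in it.

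Next I would run the classical saturated-model construction, following \cite{Ba} but entirely inside $L$. Build a continuous chain of subobjects $M_0\subseteq M_1\subseteq\cdots\subseteq M_i\subseteq\cdots$ $(i<\lambda)$ of $L$, with $M_0$ of size $\lambda_E+\lambda_U$, each $M_i$ of size $<\lambda$, and $M_{i+1}$ obtained from $M_i$ by adjoining, inside $L$, a realization of every Galois type over $M_i$: by the type bound there are at most $|M_i|$ such types, each realized by a single element of $U(L)$ (again by $\lambda'$-saturation), so the downward L\"owenheim--Skolem property of $L$ furnishes such an $M_{i+1}$ of size $<\lambda$; one pads the sizes so that they are cofinal in $\lambda$ when $\lambda$ is a limit, and keeps them constant otherwise. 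Put $M^\ast=\bigcup_{i<\lambda}M_i$, so $|M^\ast|=\lambda$. Since $\lambda$ is regular, $M^\ast$ is a $\lambda$-directed colimit of the $M_i$, so every morphism $X\to M^\ast$ with $X$ $\lambda$-presentable factors through some $M_i$; amalgamating (Assumption~\ref{blanket}) a given type over $X$ with $X\to M_i$ over $X$ produces a type over $M_i$, which is realized in $M_{i+1}$, hence in $M^\ast$. Thus $M^\ast$ is $\lambda$-Galois saturated. By $\lambda$-categoricity $M^\ast\cong M$, so $M$ is $\lambda$-Galois saturated; and since $(\ck,U)$ is a coherent large accessible category with concrete directed colimits whose morphisms are concrete monomorphisms, and $\lambda$ is sufficiently large, Proposition~\ref{prop6.2} gives that $M$ is $\lambda$-saturated. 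As $|M|=\lambda$, this is precisely the assertion that $M$ is saturated.

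The main obstacle is keeping control of the sizes in the chain: a priori the number of Galois types over $M_i$ could exceed $|M_i|$, and realizing them all would push $|M_{i+1}|$ up to $\lambda$ or beyond, so that the union would fail to have size $\lambda$. This is exactly where $\lambda$-categoricity enters, through the stability supplied by Theorem~\ref{theo7.4}. Everything else---shrinking types to small witnesses, transitivity of type-equivalence under amalgamation, and the factorization of $\lambda$-presentable subobjects through stages of a $\lambda$-directed colimit---is routine, as is the concluding appeal to Proposition~\ref{prop6.2}.
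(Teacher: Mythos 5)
Your proposal is correct and follows essentially the same route as the paper: build a smooth chain of length $\lambda$ inside a monster object, realizing all Galois types at each step, identify the colimit with $M$ by categoricity, and pass from Galois saturation to saturation via Proposition~\ref{prop6.2}. The only difference is one of explicitness---you spell out the appeal to Theorem~\ref{theo7.4} needed to keep the stages small and obtain $|M^\ast|=\lambda$ directly from the construction, where the paper leaves the stability bound implicit and instead cites Remark~\ref{re3.3} for the size computation.
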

\begin{proof}  Fixing $M_0$ $\mu$-presentable with $\mu<\lambda^+$ and a morphism of $M_0$ into the monster object $L$, we build a smooth chain 
of length $\lambda$ of models $M_\alpha$, where each models realizes all Galois types over its predecessor. The colimit $M_\lambda$ of this chain 
is $\lambda^+$-presentable and is $\lambda$-Galois saturated. Following \ref{prop6.2}, $M_\lambda$ is $\lambda$-saturated and, following \ref{re3.3},
$M_\lambda$ has size $\lambda$. Thus $M\cong\ M_\lambda$ is saturated.
\end{proof}

In fact, this argument gives saturated models in all regular cardinals $\mu<\lambda^+$ with $\mu\geq\lambda_U+\lambda_E$. It also yields
the following.

\begin{rem}\label{re7.9}
{
\em
Suppose there is a proper class of cardinals $\lambda$ with $\lambda^{<\lambda}=\lambda$.  Let $\ck$ be a coherent large accessible category with concrete directed colimits, 
whose morphisms are concrete monomorphisms. 

(1) Let $\ck$ be $\lambda$-Galois stable for a regular cardinal $\lambda\geq\lambda_U+\lambda_E$. Then $\ck$ has a saturated object of size $\lambda$.

(2) Let $\lambda\geq\lambda_U+\lambda_E$ be a regular cardinal. Then $\ck$ is $\lambda$-categorical if and only if every object of $\ck$ of size $\geq\lambda$
is $\lambda$-saturated. From the proof of theorem 7.3 we know that any object of size $\geq\lambda$ is a $\lambda^+$-directed colimit of objects of size $\lambda$.
Since a $\lambda^+$-directed colimit of $\lambda$-saturated objects is $\lambda$-saturated, Corollary 7.8 implies that all objects of size $\geq\lambda$ 
are $\lambda$-saturated provided that $\ck$ is $\lambda$-categorical. Conversely, if all objects of size $\geq\lambda$ are $\lambda$-saturated, all objects
of size $\lambda$ are saturated and thus isomorphic. Since $\ck$ has an object of size $\lambda$, it is $\lambda$-categorical.
}
\end{rem}

\end{document}